\documentclass{article}

\usepackage{PRIMEarxiv}

\usepackage[utf8]{inputenc} % allow utf-8 input
\usepackage[T1]{fontenc}    % use 8-bit T1 fonts
\usepackage{hyperref}       % hyperlinks
\usepackage{url}            % simple URL typesetting
\usepackage{booktabs}       % professional-quality tables
\usepackage{amsfonts}       % blackboard math symbols
\usepackage{nicefrac}       % compact symbols for 1/2, etc.
\usepackage{microtype}      % microtypography
\usepackage{lipsum}
\usepackage{fancyhdr}       % header
\usepackage{graphicx}       % graphics
\graphicspath{{media/}}     % organize your images and other figures under media/ folder

\usepackage{amsmath,amssymb,amsfonts}         
\usepackage{tikz}
\usepackage{graphicx}
\usepackage{booktabs}

\newtheorem{lemma}{Lemma}
\newtheorem{proof}{Proof}
\newtheorem{theorem}{Theorem}

%Header
\pagestyle{fancy}
\thispagestyle{empty}
\rhead{ \textit{ }} 

% Update your Headers here
\fancyhead[LO]{Cascading failures: dynamics, stability and control}
% \fancyhead[RE]{Firstauthor and Secondauthor} % Firstauthor et al. if more than 2 - must use \documentclass[twoside]{article}

%% Title
\title{Cascading failures: dynamics, stability and control}
%%%% Cite as
%%%% Update your official citation here when published 
%\thanks{\textit{\underline{Citation}}: 
%\textbf{Authors. Title. Pages.... DOI:000000/11111.}} 
%}

\author{
  Stefanny Ramirez, Maaike Odijk\\
  Jan C. Willems Center for Systems and Control, \\
  Engineering and Technology Institute Groningen, \\
  Faculty of Science and Engineering, University of Groningen \\
  Nijenborgh 4, 9747 AG Groningen, The Netherlands\\
  \texttt{s.g.ramirez.juarez@rug.nl} \\
  %% examples of more authors
  \AND
  Dario Bauso \\
  Jan C. Willems Center for Systems and Control, \\
  Engineering and Technology Institute Groningen, \\
  Faculty of Science and Engineering, University of Groningen\\
  Nijenborgh 4, 9747 AG Groningen, The Netherlands \\
  Dipartimento di Ingegneria, Università di Palermo \\
  viale delle Scienze 90128 Palermo, Italy \\
  \texttt{d.bauso@rug.nl} \\
  %% \And
  %% Coauthor \\
  %% Affiliation \\
  %% Address \\
  %% \texttt{email} \\
  %% \And
  %% Coauthor \\
  %% Affiliation \\
  %% Address \\
  %% \texttt{email} \\
}

\begin{document}
\maketitle

\begin{abstract}
We develop a dynamic model of cascading failures in a financial network whereby cross-holdings are viewed as feedback, external assets investments as inputs and failure penalties as static nonlinearities. We provide sufficient milder and stronger conditions for the system to be a positive one, and study equilibrium points and stability. Stability implies absence of cascades and convergence of market values to constant values. We provide a constructive method for control design to obtain stabilizing market investments in the form of feedback-feedforward control inputs.
\end{abstract}

% keywords can be removed
\keywords{Cascading failures \and  financial networks \and positive systems \and stability analysis \and systemic risks}

\section{Introduction.}
\label{sec:introduction}

Cascading failures in financial networks is the phenomenon by which a little shock at one node propagates throughout the network. Nodes represent organizations and edges represent interdependent cross-holding. The shocks are financial failures and take place whenever the market value of an organization falls below a threshold. The market value of an organization depends on their investments on external assets and on shares of other organizations. 

\subsection{Main contribution.}

We highlight the following main contributions. First, we extend the model in~\cite{Eisenberg} and \cite{elliott2014financial} to a dynamic setting whereby  market values change with time based on the current investments and current cross-holdings. The model has a feedback structure (cross-holdings) with external inputs (assets investments) and static nonlinearities (discontinuous failure penalties).

Second, we provide a detailed analysis within the context of positive systems. In particular, we provide sufficient conditions for the system to be a positive one and provide a physical interpretation in terms of cascading failure network resilience. Under such conditions, the system admits steady-state solutions (equilibrium points).

Third, we analyze conditions for such equilibrium points to be asymptotically stable by using the theory of Lyapunov stability. Stability implies absence of cascades and convergence of market values to a constant value. Furthermore, we relax the aforementioned sufficient conditions and prove that equilibrium points may exist under milder conditions on the network connectivity.

Fourth, we extend the stability analysis to the case of large-network and reinterpret the previous stability result in terms of centrality and degree connectivity of the network. 

Fifth, we provide a constructive method for control design to obtain stabilizing market investments in the form of feedback-feedforward control inputs. The feedforward control blocks the propagation of the failures while the feedback control makes the market value dynamics stable and converging to a stationary value.

\subsection{Related Literature.}
We build on the model in~\cite{elliott2014financial}, and we extend it to a dynamic setting in the same spirit as in \cite{Calafiore}. Similar models as the one in~\cite{elliott2014financial} can be found in the previous literature, see for instance~\cite{Eisenberg}. In~\cite{Eisenberg} the authors define the equity value in terms of debts and payments between institutions, and the cascade of failures is propagated when a failed institution reduces its payment to its creditors, which can generate the failure of at least one of these creditors. On the other hand, in~\cite{elliott2014financial} the equity value is defined in terms of shares held by the organizations as well as by external shareholders, and the propagation of failure is greatly affected by a failure cost. 

Dynamic threshold models are a classic of the literature on cascading failures and for further details we refer the reader to the seminal works~\cite{ADO2012}, \cite{Glasserman}, \cite{Granovetter}, \cite{Jackson}, \cite{rossi2017threshold}, \cite{Massai} and \cite{Rakesh21}. Dynamical models to study and mitigate the systemic risk in financial networks are developed in~\cite{Capponi} and \cite{Chen21}. A dynamic setting allows us to use tools from systems and control to examine the stability of a system with multiple equilibrium points. In particular, our model uses quaternion transformation matrices and this allows to view it as a positive system~\cite{farina2000positive}, and analyse the transient of the system when there exists any perturbation that drives it away  from an equilibrium point. Furthermore, thanks to this dynamic framework, in our work we design a feedback-feedforward control to determine the market investments that stabilize the system and stop the cascade of failures.

The literature on cascading failures is vast and grows in several directions some of which depart from the scope of the current paper. For instance, trading uncertainty and market frictions are the main focus in~\cite{Douglas}. The vulnerability to failures of interconnected economic networks, also referred to as network of networks, is studied in~\cite{havlin2015cascading}. Contagious links and their long-term impact on the resilience of random networks  are studied in~\cite{Amini}. Stochastic models of cascading failures to determine the moment of the first failure in financial networks are developed in~\cite{Swift2008} and \cite{lee2020new}. The underlying perspective in this paper follows  the idea of viewing ``economists as engineers'' in the context of market design~\cite{Roth} and \cite{Smith}. 

Previous attempts of assimilating cascading failures to epidemic models seem to fall short of satisfactory answers when it comes to understanding pairwise interactions, which is the main focus in~\cite{Vega-Redondo}. The use of contagion models to understand the dynamics of financial and biological networks is analyzed in~\cite{Peckham}. In~\cite{Barja19} the authors adapt the framework of epidemiology models to default propagation in financial networks. In this sense,  neighbour correlation may have an impact on preventing exponential growth of contagion~\cite{Morris}. Here the focus is on the different behaviour observed in strong and weak social links. More specifically, a main factor of limitation of contagion is provided by  strong links for which the neighbor relations are in general transitive (friends of friends are usually friends) in accordance to the definition in~\cite{Granovetter}.  Another factor limiting contagion are random links between otherwise distant nodes. This is observed for instance whenever  the contagion requires simultaneous exposure to multiple sources of activation~\cite{Centola}. Influence maximization is also a challenging factor in networks with threshold models. The idea here is to find the smallest set of nodes with maximal aggregated influence~\cite{CYZ2010},~\cite{GLL2011}. A game-theoretic perspective of contagion assimilates it to a coordination game~\cite{Ellison}. Cascades of failures follow the rules of contagion in pairwise interactions between nodes in a network. This places emphasis on certain network properties such as  connectivity or centrality. While connectivity allows the failure to propagate, a high connectivity is proven to limit the diffusion due to the impact of high-degree nodes which are very stable~\cite{Lelarge}. 

\subsection{Organization of the manuscript.}

This manuscript is organized as follows. In Section~\ref{sec:model}, we develop the model. 
In Section~\ref{sec:analysis}, we cast the model within the theory of positive systems and provide the equilibrium point analysis including stability. In Section~\ref{sec:num}, we perform numerical simulations.  Finally in Section~\ref{sec:conc} we provide conclusions.

\section{Model.}
\label{sec:model}

The financial network involves a set of companies $N = \{1,\ldots,n\}$ and a set of primitive assets $M=\{1,\ldots,m\}$. Asset $h\in M$ is characterized by the market price $p_h\in \mathbb R$. Company $i\in N$ is characterized by an equity value $V_i \in \mathbb R$ and a market value $v_i \in \mathbb R$. Company $i$ owns a fraction $c_{ij} \in [0,1] $ of company $j$ for each pair $i,j\in N$, $i\not = j$. No company has shares in itself, i.e., $c_{ii} = 0$  $\forall i \in N$. The fraction of company $i$ which is not owned by other companies is  $\hat{c}_{ii} := 1 - \sum_{j \in N, \, i\not = j}{c_{ji}}$, $\hat{c}_{ii}>0$. Likewise, company $i$ has a share $D_{ih}$ of asset $h$, for each $i \in N$ and $h\in M$. To introduce a compact notation, denote by $V=[V_i]_{i \in N} \in \mathbb R^n$ and $v=[v_i]_{i \in N} \in \mathbb R^n$ the vector of equity values and market values, respectively. Also let $C:=[c_{il}]_{i,l\in N} \in \mathbb R^{n\times n}$ be the matrix of cross-holding, $D:=[D_{ih}]_{i\in N,\, h \in M}$ be the matrix of shares on primitive assets, and $p=[p_h]_{h\in M}$ the vector of market prices for the primitive assets. Let also denote by $\hat C:=diag(\hat c_{ii})\in \mathbb R^{n\times n}$ the diagonal matrix containing all the fractions of companies not owned by other companies. When the market value $v_i$ of company $i$ goes below a given threshold $\underline v_i$ the company `fails' and incurs a fixed nonnegative failure cost $\beta_i \in \mathbb R_+$. The equity value is determined by the value of the company primitive assets, the value of its shares on other companies and the failure cost according to the following discrete-time dynamic model:
 \begin{equation}
    \begin{cases} V(t+1) = CV(t) + Dp - \phi(v),
                \\v(t) = \hat CV(t),
    \end{cases}
    \label{eq:equity_value_model_1}
\end{equation}
where $\phi(v) = [\beta_i I_{v_i < \underline{v}_i}(v_i)]_{i\in N} \in \mathbb R^n$ and $I_{v_i < \underline{v}_i}(v_i)=1$ if $v_i < \underline v_i$ and $I_{v_i < \underline{v}_i}(v_i)=0$ otherwise. 

To investigate the properties of (\ref{eq:equity_value_model_1}) in the context of positive systems let us rewrite it in terms of the new variables
\begin{equation}\label{eit}
    x_i(t) = v_i(t)-\underline{v}_i, \quad \mbox{for all $i \in N$}.
\end{equation}

%\textcolor{blue}{Note that (\ref{eit}) does not produce any problem when we want to distinguish between the equity value and the market value, since there is no coupling between these variables. These new variables allow us to obtain a direct interpretation of the market value $v_i(t)$ in case of failure.} 
Company $i$ fails if $x_i(t)<0$, and does not fail if $x_i(t)\geq 0$. Therefore the orthant that contains the current state $x(t)=[x_i(t)]_{i \in N}$ determines also which companies have failed at time $t$. When an orthant is invariant, namely when the state trajectory does not transition between different orthants, the sign of $x_i(t)$ does not change over time and there is no cascade of failure.

The term $\phi(v)$ can be modelled as a bang-bang input. Let us consider the offset $U_i^{(offset)}=\frac{\beta}{2}$ and let us introduce the function
\begin{equation}
    U_i(x) := 
    \begin{cases}  \frac{\beta}{2} , &\text{if}  \textcolor{white}{...} x_i<0, 
                \\-\frac{\beta}{2}, &\text{if}  \textcolor{white}{...} x_i\geq0. 
    \end{cases}
    \label{eq:ON-OFF}
\end{equation}
Let the following vectors be given $U(x):=[U_i(x)]_{i \in N} \in \mathbb R^n$ and $U^{(offset)}:=[U_i^{(offset)}]_{i \in N} \in \mathbb R^n$. We can rewrite $\phi(v)$ as follows:
\begin{equation}\label{eq:bv}
    \phi(v) = U(x) + U^{(offset)} = U(x) + \mathbf{1}\frac{\beta}{2}.
\end{equation}

The term $\phi(v)$ as in (\ref{eq:bv}) represents an offset of the penalty term and can be interpreted as a static nonlinearity. From (\ref{eq:equity_value_model_1}), (\ref{eit}) and (\ref{eq:bv}), after  denoting $\underline v :=[\underline v_i]_{i\in N}$, dynamics~(\ref{eq:equity_value_model_1})  can be reformulated as a dynamics on the error $x(t)$ as: 
\begin{align}
    \begin{array}{lll} x(t+1) &= v(t+1) - \underline v = \hat C V(t+1) - \underline v = \hat C \Big[ C V(t) + Dp  - \phi (v) \Big] - \underline v \\
    & =\hat{C}C\hat{C}^{-1}x(t) + (\hat{C}C\hat{C}^{-1}-I)\underline{v} + \hat{C}(Dp - U(x(t)) - \mathbf{1}\frac{\beta}{2}).
    \end{array}
    \label{eq:error_model_1}
\end{align}

In the last equality we have used the fact that  
$$V(t)= \hat C^{-1} v(t) = \hat C^{-1} (x(t) +\underline v).$$

Let us enumerate the $2^{n}$ orthants according to a specific but random order by using the index  $r=1,\ldots,\hat{r}$ where $\hat{r}:=2^n$. %For every initial state  $x(0)$ in 
Let us denote by
$\mathcal O^{(r)}$ the $r$th orthant. For each orthant $\mathcal O^{(r)}$ $r=1,\ldots,\hat{r}$ consider the following quaternion matrix $J^{(r)}=[j_{il}^{(r)}]_{i,l\in N}$ where $j^{(r)}_{il} := 0$, for all $i,l\in N, \, i \not = l,$ and for any generic $x$ in $\mathcal O^{(r)}$
\begin{equation}
    j^{(r)}_{ii} := 
    \begin{cases} -1 ,& \textcolor{white}{....}\text{if} \textcolor{white}{.} x_i<0, 
                \\1 ,& \textcolor{white}{....}\text{if}  \textcolor{white}{.} x_i\geq0.
    \end{cases} 
    \label{eq:transform}
\end{equation}
This transformation allows to turn any orthant into the positive orthant. In other words, for any initial state $x(0)$ in the $r$th orthant $\mathcal O^{(r)}$ we have a corresponding initial state in the positive orthant given by 
\begin{equation}\label{eq:transformation}
    X(0) = J^{(r)}x(0)\in \mathbb R_+^n.
\end{equation}
The transformation matrix can be used to define any generic orthant as follows $\mathcal O^{(r)}:=\{x \in \mathbb R^n|\, J^{(r)} x \geq 0\}$ for all $r= 1,\ldots, \hat r$. Using the quaternion matrix $J^{(r)}$, dynamics (\ref{eq:error_model_1}) can be written in terms of the new variable $X(t)$ as:
\begin{equation}
        %\left\{ 
      \begin{array}{ll} X(t+1) = J^{(r)}\hat{C}C\hat{C}^{-1}J^{(r)}X(t) + J^{(r)}\Big[(\hat{C}C\hat{C}^{-1}-I)\underline{v} + \hat{C}\Big(Dp - U(J^{(r)}X(t)) - \mathbf{1}\frac{\beta}{2} \Big)\Big].
    \end{array}
    %\right.
    \label{eq:model_etilde}
\end{equation}

After introducing the new matrix 
\begin{equation}
\begin{array}{lll}
\label{eqn:structurematrixA^r}
    A^{(r)} :=  [a_{ij}]_{i,j=1,\ldots,n}
    :=  J^{(r)}\hat{C}C\hat{C}^{-1}J^{(r)}
    =  \begin{bmatrix}
    0  & \dots & j_{nn}\frac{1}{\hat{c}_{nn}}c_{1n}j_{11}\hat{c}_{11}\\
    j_{11}\frac{1}{\hat{c}_{11}}c_{21}j_{22}\hat{c}_{22}  & \dots & j_{nn}\frac{1}{\hat{c}_{nn}}c_{2n}j_{22}\hat{c}_{22}\\
    \vdots  & \ddots & \vdots\\
    j_{11}\frac{1}{\hat{c}_{11}}c_{n1}j_{nn}\hat{c}_{nn}  & \dots & 0\\
    \end{bmatrix},
\end{array}
\end{equation}
and vector 
\begin{equation}\label{br}
b^{(r)}(X):=J^{(r)}\Big[(\hat{C}C\hat{C}^{-1}-I)\underline{v} + \hat{C}\Big(Dp - U(J^{(r)}X) - \mathbf{1}\frac{\beta}{2} \Big)\Big]
\end{equation}
the model can be written in compact form as:
\begin{equation}
    X(t+1) = A^{(r)}X(t)+b^{(r)}(X(t)).
    \label{eq:error_model_33}
\end{equation}

Equivalently we can write the above in input-output form as \begin{equation}
    \begin{cases} X(t+1) = A^{(r)}X(t)+b^{(r)}(X(t)),\\
    v(t)= J^{(r)}X(t) + \underline{v}.
    \end{cases}
    \label{eq:error_model_3}
\end{equation}

Figure \ref{fig:block_error_tilde} depicts a block diagram representation of (\ref{eq:error_model_3}) in feedback-form. 
\begin{figure} [h]
\centering
\begin{tikzpicture}[scale=0.70]%[overlay]
\begin{scope}[shift={(0,0)},scale=1]
 \draw [rounded corners, very thick](-0.9,0) rectangle (6.6,1.6);
 \node at (2.9,.8) {\footnotesize 
 \(\displaystyle 
 %\left\{
 \begin{array}{ll}
 X(t+1) = A^{(r)}X(t)+b^{(r)}(X(t))\\
 v(t)= J^{(r)}X(t) + \underline{v}
 \end{array}
 %\right.
 \)
 };
 \draw[->, very thick](6.6,.7)--(7.6,.7);\node at (7.2,1.1) {\small $v(t)$};
 \draw[->,rounded corners, very thick] (7.1,.7)--(7.1,-1.5)--(-6.2,-1.5)--(-6.2,.2);\node at (-6.8,-0.4) {\small $v(t)$};
 \draw[->,rounded corners, very thick] (-5.4,.7)--(-5.4,-0.8)--
(-4,-0.8);
\draw[->,rounded corners, very thick]
(.1,-0.8) --(1.8,-0.8)--(1.8,0);\node at (-2,-0.8) {$X(t) = J^{(r)}x(t)$};
 \draw[black,rounded corners, very thick ] (-4,-1.3) rectangle (0.1,-0.3);
\draw[very thick ] (-6.2,.7) circle (.43cm);\node at (-6.2,0.5) {\tiny $+$};\node at (-6.4,0.7) {\tiny $-$};
\draw[->, very thick](-7.5,.7)--(-6.7,.7);\node at (-7.2,1.0) {$\underline v $};
\draw[->, very thick ](-5.8,.7)--(-4.9,.7);\node at (-5.4,1.2) {\footnotesize $x(t)$};
\draw[black,rounded corners, very thick ] (-4.9,0.2) rectangle (-3.9,1.2);\node at (-4.4,.7) {\footnotesize $U(.)$};
%{\begin{tabular}{cc} ON- \\ OFF \end{tabular}};
\draw[->, very thick ](-3.9,.7)--(-3.1,.7);
\draw [very thick ](-2.7,.7) circle (.43cm);\node at (-2.9,0.7) {\tiny $-$};\node at (-2.7,0.99) {\tiny $-$};
\draw [->, very thick ] (-2.7,2.7)-- (-2.7,1.2);\node at (-2.4,2.3) {$\frac{\beta}{2}$};
\draw [->, very thick ] (-2.3,.7)-- (-0.95,.7);\node at (-1.7,1.2) {\footnotesize $-\phi(v)$};
\draw [->, very thick] (2.8,3.0)-- (2.8,1.7);\node at (3.3,2.4) {$Dp$};
\end{scope}
\end{tikzpicture}
\vspace{5mm}
\caption{Block diagram representation of (\ref{eq:error_model_3}) in feedback-form.} 
\label{fig:block_error_tilde}
\end{figure}
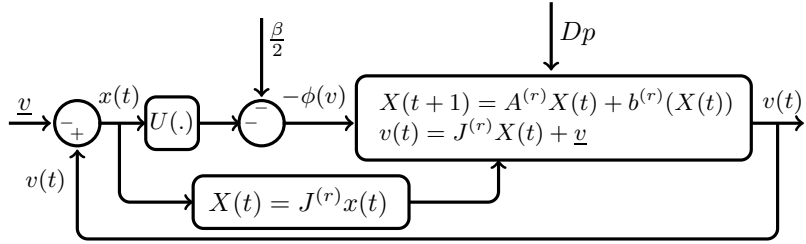

\section{Analysis.}
\label{sec:analysis}

We recall that a square matrix $A \in \mathbb R^{n \times n}$ is nonnegative if the entries $a_{ij}\geq  0$ for all $i,j=1,\ldots n$. We denote $A \geq 0$ and $v \geq 0$ for a nonnegative matrix $A$ and vector $v$. If the inequalities are strict, namely $a_{ij}>0$ then the matrix is said  positive. We denote $A > 0$ and $v > 0$  for a positive  matrix A and vector $v$.

An affine system of type $x(k+1) = A x(k) +b$ is positive if $A\geq 0$ and $b\geq 0$. For a positive system it holds that $x(0) \geq 0$ implies $x(k)\geq 0$ for all $k=1,2,\ldots$. Also, if $A$ is convergent then $\lim_{k \rightarrow  \infty} x(k) = (I-A)^{-1} b \geq 0$. We recall that $A$ is convergent if all eigenvalues are in the unit disc, namely $|\lambda|<1$ where $\lambda \in spec(A)$ and $spec(A)$ denotes the spectrum of $A$ (set of all eigenvalues).

%\subsection{Deterministic case}
%----- DETERMINISTIC CASE ----------

Consider the generic orthant $\mathcal O^{(r)}$ and let $\mathcal P^{(r)}:=\{ i\in N: \, j_{ii}=1 \}$ and $\mathcal N^{(r)}:=\{ i\in N: \, j_{ii}=-1 \}$. Evidently it holds $N = \mathcal P^{(r)} \cup \mathcal N^{(r)}$ and $ \mathcal P^{(r)}$, $ \mathcal N^{(r)}$ are a partition of~$N$. Let us define $b^{(r)}_+(X):= [b^{(r)}_i(X)]_{i \in \mathcal P^{(r)}} \in \mathbb R^{|\mathcal P^{(r)}|}$. In other words, $b^{(r)}_+(X)$ is the vector obtained by extracting only the components $i \in \mathcal P^{(r)}$ from $b^{(r)}(X)$.   

The first result is about the sign of the vector function $b^{(r)}(X) \in \mathbb R^n$ as defined in (\ref{br}).
\begin{lemma}\label{trm:bk}
Let $x \in \mathcal O^{(r)}$ be given, and consider $X = J^{(r)} x$. We have $b^{(r)}(X) \geq 0$ if  for all $i=1,\ldots,n$: 
\begin{eqnarray}\label{constraintb>0_1}
             (Dp)_{i} \leq \Big((\hat{C}^{-1}-C\hat{C}^{-1})\underline{v}\Big)_i +\beta \qquad \text{ if } i \in \mathcal N^{(r)},
                \\
                \label{constraintb>0_1bis}
                (Dp)_{i} \geq \Big((\hat{C}^{-1}-C\hat{C}^{-1})\underline{v}\Big)_i  \qquad \text{if }  i \in \mathcal P^{(r)}.
\end{eqnarray}
\end{lemma}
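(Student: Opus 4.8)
The plan is to unpack the definition of $b^{(r)}(X)$ in (\ref{br}) component by component and identify, for each row $i$, the condition under which that component is nonnegative. First I would write, for a generic $x\in\mathcal O^{(r)}$ and $X=J^{(r)}x$, the inner bracket $w := (\hat C C\hat C^{-1}-I)\underline v + \hat C\big(Dp - U(J^{(r)}X)-\mathbf 1\tfrac\beta2\big)$, so that $b^{(r)}(X)=J^{(r)}w$. Since $J^{(r)}$ is diagonal with $j_{ii}=1$ for $i\in\mathcal P^{(r)}$ and $j_{ii}=-1$ for $i\in\mathcal N^{(r)}$, requiring $b^{(r)}_i(X)\ge 0$ is the same as requiring $w_i\ge 0$ when $i\in\mathcal P^{(r)}$ and $w_i\le 0$ when $i\in\mathcal N^{(r)}$.

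Next I would evaluate $U(J^{(r)}X)$ on the orthant $\mathcal O^{(r)}$. By construction of $J^{(r)}$, the point $x=J^{(r)}X$ has $x_i\ge 0$ exactly for $i\in\mathcal P^{(r)}$ and $x_i<0$ exactly for $i\in\mathcal N^{(r)}$; hence by (\ref{eq:ON-OFF}) the $i$-th component of $U(J^{(r)}X)$ equals $-\tfrac\beta2$ for $i\in\mathcal P^{(r)}$ and $+\tfrac\beta2$ for $i\in\mathcal N^{(r)}$. Substituting into $w$, and using that $\hat C=\mathrm{diag}(\hat c_{ii})$ so that $\big(\hat C(\cdots)\big)_i=\hat c_{ii}(\cdots)_i$ acts rowwise, the term $\hat C\big(-U(J^{(r)}X)-\mathbf 1\tfrac\beta2\big)$ contributes $\hat c_{ii}\cdot 0 = 0$ in rows $i\in\mathcal P^{(r)}$ and $\hat c_{ii}(-\beta)$ in rows $i\in\mathcal N^{(r)}$. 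Also note $(\hat C C\hat C^{-1}-I)\underline v = \hat C(C\hat C^{-1}-\hat C^{-1})\underline v = -\hat C(\hat C^{-1}-C\hat C^{-1})\underline v$, so that $w_i = -\hat c_{ii}\big((\hat C^{-1}-C\hat C^{-1})\underline v\big)_i + \hat c_{ii}(Dp)_i + (\text{0 or }-\hat c_{ii}\beta)$.

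Then I would divide each row through by $\hat c_{ii}>0$ (which does not change signs). For $i\in\mathcal P^{(r)}$ the requirement $w_i\ge 0$ becomes $(Dp)_i \ge \big((\hat C^{-1}-C\hat C^{-1})\underline v\big)_i$, which is exactly (\ref{constraintb>0_1bis}). For $i\in\mathcal N^{(r)}$ the requirement $w_i\le 0$ becomes $(Dp)_i - \big((\hat C^{-1}-C\hat C^{-1})\underline v\big)_i - \beta \le 0$, i.e. $(Dp)_i \le \big((\hat C^{-1}-C\hat C^{-1})\underline v\big)_i + \beta$, which is exactly (\ref{constraintb>0_1}). This establishes that the two displayed conditions imply $b^{(r)}(X)\ge 0$.

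The only mildly delicate point — and the step I would be most careful with — is the bookkeeping of where the $J^{(r)}$ on the outside, the $J^{(r)}$ inside $U(J^{(r)}X)$, and the row-scaling by $\hat C$ interact: one must confirm that the sign flip from the outer $J^{(r)}$ in rows $\mathcal N^{(r)}$ is precisely what turns a lower bound into an upper bound, and that $U$ evaluated on the transformed point really does land on the branch dictated by membership in $\mathcal P^{(r)}$ versus $\mathcal N^{(r)}$ rather than on the raw sign of $X$. Everything else is linear algebra with diagonal matrices. I would also remark that the conditions are only sufficient because the off-diagonal cross-holding contributes through $C\hat C^{-1}\underline v$ but $U$ depends only on the orthant, so no cancellation across rows is exploited.
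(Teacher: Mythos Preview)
Your proposal is correct and follows essentially the same route as the paper: both arguments write $b^{(r)}_i(X)=j^{(r)}_{ii}w_i$, split into the two cases $i\in\mathcal P^{(r)}$ and $i\in\mathcal N^{(r)}$, evaluate the $U$-term on the orthant, divide through by the positive diagonal entry $\hat c_{ii}$, and read off the two inequalities. Your closing remark that the conditions are ``only sufficient'' is slightly off---each step in the derivation is in fact an equivalence, so the row-wise conditions are also necessary for $b^{(r)}_i(X)\ge 0$---but this does not affect the proof itself.
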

\begin{proof}
Let us start by considering that for the $i$th component we have $b^{(r)}_i(X)\geq 0$ if and only if
\begin{align}\label{eq:brdef>0}
    \begin{array}{lll}
        b^{(r)}_i(X)&=\Big(J^{(r)}\Big[(\hat{C}C\hat{C}^{-1}-I)\underline{v} 
        + \hat{C}\Big(Dp - U(J^{(r)}X) - \mathbf{1}\frac{\beta}{2}\Big)\Big]\Big)_i \\
        &=j^{(r)}_{ii} 
        \Big((\hat{C}C\hat{C}^{-1}-I)\underline{v} + \hat{C}\Big(Dp - U(J^{(r)}X) - \mathbf{1}\frac{\beta}{2}\Big)\Big)_i
        \geq 0.
        \end{array}
\end{align}
For the first part of the proof let $j^{(r)}_{ii} = -1$. From $j^{(r)}_{ii} = -1$, inequality (\ref{eq:brdef>0}) is true if and only if 
$$\Big((\hat{C}C\hat{C}^{-1}-I)\underline{v} + \hat{C}\Big(Dp -  U(J^{(r)}X) - \mathbf{1}\frac{\beta}{2}\Big)\Big)_i\leq 0.$$
    
From $\hat C$ being positive and diagonal the pre-multiplication of the vector in the left-hand-side by $\hat C^{-1}$ keeps the sign of the elements of the vector. Hence, the above inequality is true if and only if 
    \begin{equation}
        \begin{array}{cc}
             \Big(\hat{C}^{-1} (\hat{C}C\hat{C}^{-1}-I)\underline{v} + \hat{C}^{-1} \hat{C}(Dp - U(J^{(r)}X) - \mathbf{1}\frac{\beta}{2}) \Big)_i  \\
             =\Big( (C\hat{C}^{-1}-\hat{C}^{-1})\underline{v} + Dp - U(J^{(r)}X) - \mathbf{1}\frac{\beta}{2} \Big)_i\leq 0.
        \end{array}    
    \end{equation}
From the above we obtain  
\begin{equation}\label{eq:dpbr1}
         (Dp)_{i} \leq \Big(-(C\hat{C}^{-1}-\hat{C}^{-1})\underline{v}+U(J^{(r)}X) + \mathbf{1}\frac{\beta}{2}\Big)_i.
\end{equation}
     
From $j^{(r)}_{ii} = -1$ we have $\Big(U(J^{(r)}X) + \mathbf{1}\frac{\beta}{2}\Big)_i=\beta$  and  (\ref{constraintb>0_1}) is proven. 

For the second inequality (\ref{constraintb>0_1bis}), 
let $j^{(r)}_{ii} = 1$.   Then inequality (\ref{eq:brdef>0}) is true if and only if 
    $$\Big((\hat{C}C\hat{C}^{-1}-I)\underline{v} + \hat{C} \Big(Dp - U(J^{(r)}X) - \mathbf{1}\frac{\beta}{2}\Big) \Big)_i\geq 0.$$
The above inequality is true if and only if 
    \begin{equation}
        \begin{array}{cc}
             \Big(\hat{C}^{-1} (\hat{C}C\hat{C}^{-1}-I)\underline{v} + \hat{C}^{-1} \hat{C}\Big(Dp - U(J^{(r)}X) - \mathbf{1}\frac{\beta}{2} \Big) \Big)_i  \\
             =\Big( (C\hat{C}^{-1}-\hat{C}^{-1})\underline{v} + Dp - U(J^{(r)}X) - \mathbf{1}\frac{\beta}{2} \Big)_i\geq 0.
        \end{array}    
\end{equation}
As $j_{ii}^{(r)}=1$ then we have $U(J^{(r)}X) + \mathbf{1}\frac{\beta}{2}=0$ and for the above we obtain  \begin{equation}\label{eq:dpbr2}
         (Dp)_{i} \geq \Big(-(C\hat{C}^{-1}-\hat{C}^{-1})\underline{v}\Big)_i,
     \end{equation}
     and  (\ref{constraintb>0_1bis}) is proven. 
$\square$
\end{proof}

The next result addresses the case where not all companies are inter-dependent. In terms of the interdependent graph, this means that such graph is not completely connected. 

In preparation to the next result, let  $\hat{A}^{(r)}:=[A_{i\bullet}^{(r)}]_{i \in \mathcal P^{(r)}} $. Furthermore, let $\hat{A}^{(r)}_+:=[a_{ij}^{(r)}]_{i,j \in \mathcal P^{(r)}} $ and $\hat{A}^{(r)}_-:=[a_{ij}^{(r)}]_{i\in \mathcal P^{(r)}, \, j\in \mathcal N^{(r)}} $. In other words we can partition $\hat{A}^{(r)} = [\hat{A}^{(r)}_+ | \hat{A}^{(r)}_-]$. 
Also,  let $X_+(t) := [X_i(t)]_{i \in \mathcal P^{(r)}}$ and $X_-(t) := [X_i(t)]_{i \in \mathcal N^{(r)}}$. Evidently we can partition $X(t) := [X_+(t)^T \, | X_-(t)^T ]^T$.

\begin{lemma} \label{trm:pos_analysis_case4}    Matrix $A^{(r)}$ is a positive matrix, i.e. 
\begin{equation}\label{cil}
    c_{il}=0, \mbox{for all $i,l \in N$: $j^{(r)}_{ii}\neq j^{(r)}_{ll}$ }.
    \end{equation}
    Furthermore the submatrix $\hat{A}^{(r)}:=[A_{i\bullet}^{(r)}]_{i \in \mathcal P^{(r)}} $ is a positive matrix if \begin{equation}\label{cilsub}
    c_{il}=0, \mbox{for all $i,l \in N$: $i\in \mathcal P^{(r)}$, $l \in \mathcal N^{(r)}$}.
    \end{equation}
\end{lemma}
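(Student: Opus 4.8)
The plan is to prove the (intended) equivalence by computing the $(i,l)$ entry of $A^{(r)}$ explicitly and reading off its sign. Since $J^{(r)}$, $\hat C$ and $\hat C^{-1}$ are all diagonal, the four--fold product $A^{(r)}=J^{(r)}\hat C C\hat C^{-1}J^{(r)}$ acts on the $(i,l)$ entry of $C$ by scalar multiplication only, so
$$a^{(r)}_{il}=j^{(r)}_{ii}\,\hat c_{ii}\,c_{il}\,\frac{1}{\hat c_{ll}}\,j^{(r)}_{ll}=j^{(r)}_{ii}j^{(r)}_{ll}\,\frac{\hat c_{ii}}{\hat c_{ll}}\,c_{il},$$
which is exactly the entry displayed in~(\ref{eqn:structurematrixA^r}). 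Because $\hat c_{ii}>0$ for all $i$ and $c_{il}\ge 0$, the factor $\frac{\hat c_{ii}}{\hat c_{ll}}c_{il}$ is nonnegative, and hence the sign of $a^{(r)}_{il}$ is governed entirely by the product $j^{(r)}_{ii}j^{(r)}_{ll}\in\{-1,+1\}$, with $a^{(r)}_{il}=0$ whenever $c_{il}=0$.

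First I would dispose of the diagonal: $a^{(r)}_{ii}=0$ since $c_{ii}=0$. Next, I would split the off--diagonal pairs according to whether $i$ and $l$ lie in the same block of the partition $N=\mathcal P^{(r)}\cup\mathcal N^{(r)}$. If $j^{(r)}_{ii}=j^{(r)}_{ll}$ then $j^{(r)}_{ii}j^{(r)}_{ll}=1$ and $a^{(r)}_{il}=\frac{\hat c_{ii}}{\hat c_{ll}}c_{il}\ge 0$ with no extra assumption. If $j^{(r)}_{ii}\ne j^{(r)}_{ll}$ then $j^{(r)}_{ii}j^{(r)}_{ll}=-1$ and $a^{(r)}_{il}=-\frac{\hat c_{ii}}{\hat c_{ll}}c_{il}\le 0$; since $\hat c_{ii}/\hat c_{ll}>0$, this entry is nonnegative if and only if it is zero, i.e.\ if and only if $c_{il}=0$. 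Collecting the two cases, $A^{(r)}\ge 0$ holds precisely when $c_{il}=0$ for every pair with $j^{(r)}_{ii}\ne j^{(r)}_{ll}$, which is~(\ref{cil}).

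For the submatrix $\hat A^{(r)}=[A^{(r)}_{i\bullet}]_{i\in\mathcal P^{(r)}}$ I would repeat the case analysis with $i$ restricted to $\mathcal P^{(r)}$ and $l$ ranging over all of $N$, using the partition $\hat A^{(r)}=[\hat A^{(r)}_+\,|\,\hat A^{(r)}_-]$. For $l\in\mathcal P^{(r)}$ we have $j^{(r)}_{ii}=j^{(r)}_{ll}=1$, so every entry of $\hat A^{(r)}_+$ is automatically nonnegative. For $l\in\mathcal N^{(r)}$ we have $j^{(r)}_{ii}j^{(r)}_{ll}=-1$, so the entry $a^{(r)}_{il}$ of $\hat A^{(r)}_-$ equals $-\frac{\hat c_{ii}}{\hat c_{ll}}c_{il}\le 0$, which is nonnegative iff $c_{il}=0$. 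Hence $\hat A^{(r)}\ge 0$ iff $c_{il}=0$ for all $i\in\mathcal P^{(r)}$, $l\in\mathcal N^{(r)}$, i.e.\ (\ref{cilsub}). The point worth stressing is that dropping the rows indexed by $\mathcal N^{(r)}$ removes exactly the mixed pairs with $i\in\mathcal N^{(r)}$, $l\in\mathcal P^{(r)}$, so only the one--directional condition~(\ref{cilsub}) survives, which is genuinely weaker than~(\ref{cil}).

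There is no serious obstacle here; the argument is essentially bookkeeping. The two places to be careful are (i) justifying that the product of the three diagonal matrices with $C$ collapses to entrywise scalar multiplication, so that the entry formula in~(\ref{eqn:structurematrixA^r}) is confirmed, and (ii) in the mixed--sign case, invoking the strict positivity $\hat c_{ii}>0$ (equivalently $\hat c_{ii}/\hat c_{ll}>0$) to conclude that nonnegativity of $a^{(r)}_{il}$ forces $c_{il}=0$ rather than merely $c_{il}\le 0$ --- this is what makes the stated conditions~(\ref{cil}) and~(\ref{cilsub}) both sufficient and necessary.
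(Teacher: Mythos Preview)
Your proof is correct and follows essentially the same approach as the paper: both compute the generic entry $a^{(r)}_{il}=j^{(r)}_{ii}\hat c_{ii}c_{il}\hat c_{ll}^{-1}j^{(r)}_{ll}$ and observe that its sign is governed by $j^{(r)}_{ii}j^{(r)}_{ll}$, so the only potentially negative entries are those with $j^{(r)}_{ii}\neq j^{(r)}_{ll}$, which vanish precisely when $c_{il}=0$. Your write-up is in fact a bit more complete than the paper's, since you explicitly handle the diagonal, the same-sign case, and argue necessity (via $\hat c_{ii}/\hat c_{ll}>0$), whereas the paper's proof records only the sufficiency direction.
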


\begin{proof}
     The entries of matrix $A^{(r)}$ are defined by $a^{(r)}_{il}=j^{(r)}_{ii}\hat{c}_{ii}c_{il}\frac{1}{\hat{c}_{ll}}j^{(r)}_{ll} \textcolor{white}{...} \forall i\neq l$. If for every $j^{(r)}_{ii}, j^{(r)}_{ll} \in \{-1,1\}$ where $j^{(r)}_{ii}\neq j^{(r)}_{ll}$ the entry of the cross-holdings matrix is $c_{il}=0$, then $a^{(r)}_{il}=j^{(r)}_{ii}\hat{c}_{ii}c_{il}\frac{1}{\hat{c}_{ll}}j^{(r)}_{ll}=-1\times\hat{c}_{ii}\times0\times \frac{1}{\hat{c}_{ll}}\times 1 = 0\geq0$. In conclusion, matrix $A^{(r)}$ is a positive matrix for as long as $c_{il}=0$ for every $j^{(r)}_{ii}, j^{(r)}_{ll} \in \{-1,1\}$ where $j^{(r)}_{ii}\neq j^{(r)}_{ll}$. To prove (\ref{cilsub}) we simply apply the same reasoning only to the generic row $i$ where $j_{ii}=1$. 
    $\square$
\end{proof}

For any given orthant $\mathcal O^{(r)}$, define the subspace
$\mathcal S^{(r)}:=\{\xi \in \mathbb R^n| \,  \xi_i\geq 0, \forall i \in \mathcal P^{(r)}\}$. In simple terms, the subspace $\mathcal S^{(r)}$ is the set of points  where all positive components of a vector in $\mathcal O^{(r)}$ remain positive (but negative components can become positive). In other words, for a given $\mathcal O^{(r)}$, $\mathcal S^{(r)}= \cup_{k, \mathcal P^{(k)} \supset \mathcal P^{(r)}} \mathcal O^{(k)} $.

We are ready to establish conditions under which  $\mathcal S^{(r)}$ is positively invariant.

\begin{theorem}\label{thm1}
Let dynamics (\ref{eq:error_model_33}) be given where $X(0) \in \mathbb R_+^n$ and $x=J^{(r)} X(0)$ is in $\mathcal O^{(r)}$. If (\ref{constraintb>0_1})-(\ref{constraintb>0_1bis}) and (\ref{cil}) hold true, then we have
\begin{equation}\label{pos}
    X(0) \geq  0 \Longrightarrow X(t) \geq 0, \quad \mbox{for all $t=0,1,2,\ldots$}
\end{equation} 
Similarly, if (\ref{constraintb>0_1bis}) and (\ref{cilsub}) hold true, then we have for all $i \in \mathcal P^{(r)}$
\begin{equation}\label{pos_sub}
    X_i(0) \geq  0 \Longrightarrow 
    X_i(t) \geq 0, \quad \mbox{for all $t=0,1,2,\ldots$}
\end{equation} 

\end{theorem}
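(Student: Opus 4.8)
The plan is to prove both statements by a one‑step induction on $t$, reducing the inductive step to the two lemmas already established: Lemma~\ref{trm:pos_analysis_case4} to control the sign of the transition matrix and Lemma~\ref{trm:bk} to control the sign of the state‑dependent forcing term $b^{(r)}$.

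For (\ref{pos}) the base case is the hypothesis $X(0)\geq 0$. For the inductive step, assume $X(t)\geq 0$. Since $J^{(r)}$ is diagonal with entries $\pm 1$, it is an involution, so $x(t):=J^{(r)}X(t)$ satisfies $J^{(r)}x(t)=X(t)\geq 0$, i.e. $x(t)\in\mathcal O^{(r)}$. Then assumption (\ref{cil}) together with Lemma~\ref{trm:pos_analysis_case4} gives $A^{(r)}\geq 0$, and assumptions (\ref{constraintb>0_1})--(\ref{constraintb>0_1bis}) together with Lemma~\ref{trm:bk}, applied to this $x(t)$ and $X(t)$, give $b^{(r)}(X(t))\geq 0$. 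Reading off (\ref{eq:error_model_33}), $X(t+1)=A^{(r)}X(t)+b^{(r)}(X(t))$ is a nonnegative matrix applied to a nonnegative vector, plus a nonnegative vector, hence $X(t+1)\geq 0$, which closes the induction.

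For (\ref{pos_sub}) one cannot control $X_-(t)$, so the idea is to isolate a self‑contained subsystem for $X_+(t)=[X_i(t)]_{i\in\mathcal P^{(r)}}$. Writing the rows of (\ref{eq:error_model_33}) indexed by $\mathcal P^{(r)}$ and using $\hat A^{(r)}=[\hat A^{(r)}_+\mid\hat A^{(r)}_-]$ gives $X_+(t+1)=\hat A^{(r)}_+X_+(t)+\hat A^{(r)}_-X_-(t)+b^{(r)}_+(X(t))$. The crucial point is that, by the entrywise formula $a^{(r)}_{il}=j^{(r)}_{ii}\hat c_{ii}c_{il}\frac{1}{\hat c_{ll}}j^{(r)}_{ll}$, assumption (\ref{cilsub}) forces $\hat A^{(r)}_-=0$, so $X_-(t)$ drops out; moreover for $i,l\in\mathcal P^{(r)}$ we have $j^{(r)}_{ii}=j^{(r)}_{ll}=1$, so $\hat A^{(r)}_+\geq 0$ (this is exactly the second claim of Lemma~\ref{trm:pos_analysis_case4}). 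Finally, for $i\in\mathcal P^{(r)}$ the component $b^{(r)}_i(X)$ depends on $X$ only through $U_i(J^{(r)}X)$, which depends only on $X_i$; when $X_i\geq 0$ the $j^{(r)}_{ii}=1$ branch of the proof of Lemma~\ref{trm:bk} shows, under the $i$‑th instance of (\ref{constraintb>0_1bis}), that $b^{(r)}_i(X)\geq 0$. Hence, assuming inductively $X_+(t)\geq 0$, we get $X_+(t+1)=\hat A^{(r)}_+X_+(t)+b^{(r)}_+(X(t))\geq 0$, and the induction closes.

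The main obstacle to watch is that $b^{(r)}$ is not a constant vector but a discontinuous, state‑dependent term arising from the bang‑bang nonlinearity $U$: the argument works only because, as long as $X(t)$ stays in the closed positive orthant, the sign pattern captured by $J^{(r)}$ is frozen and $b^{(r)}(X(t))$ stays on the branch covered by Lemma~\ref{trm:bk}. The delicate case is the boundary of the orthant, where some $X_i(t)=0$ and $U$ would switch; for (\ref{pos_sub}) the decoupling $\hat A^{(r)}_-=0$ is precisely what immunizes the $\mathcal P^{(r)}$‑subsystem against the uncontrolled sign of $X_-(t)$, while for (\ref{pos}) one relies on both inequalities (\ref{constraintb>0_1})--(\ref{constraintb>0_1bis}) holding simultaneously so that $b^{(r)}(X(t))\geq 0$ on the whole nonnegative orthant, $U$‑branch included.
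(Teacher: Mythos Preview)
Your proof is correct and follows essentially the same approach as the paper: invoke Lemma~\ref{trm:pos_analysis_case4} to get nonnegativity of $A^{(r)}$ (respectively $\hat A^{(r)}_+$, with $\hat A^{(r)}_-=0$), invoke Lemma~\ref{trm:bk} to get nonnegativity of $b^{(r)}$ (respectively $b^{(r)}_+$), and conclude positive invariance. You are actually more careful than the paper in making the induction explicit and in noting that $b^{(r)}_i(X)$ depends on $X$ only through the sign of $X_i$, so that for the second claim the uncontrolled $X_-(t)$ does not contaminate $b^{(r)}_+(X(t))$; the paper's proof asserts $b^{(r)}_+(X(t))\geq 0$ without isolating this point.
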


\begin{proof}
From Lemmas \ref{trm:bk} and \ref{trm:pos_analysis_case4}, if (\ref{constraintb>0_1})-(\ref{constraintb>0_1bis}) and (\ref{cil}) hold true, then we have that $A^{(r)} \geq 0$ and $b^{(r)} \geq 0$. This means that (\ref{eq:error_model_33}) is a positive affine system and this in turn implies that if the system is initialized in the positive orthant, the trajectory will remain confined in the positive orthant. 

For the second part of the proof,  we can write
$$X_+(t+1) = A^{(r)}_+ X_+(t) + A^{(r)}_- X_-(t) + b^{(r)}_+(X(t)).$$

From (\ref{cilsub}) we have that $A^{(r)}_-$ is a null matrix and therefore the above is equivalent to 
$$X_+(t+1) = A^{(r)}_+ X_+(t) + b^{(r)}_+(X(t)) \geq 0.$$

From (\ref{constraintb>0_1bis}) we have $b^{(r)}_+(X(t)) \geq 0$. It also holds $A^{(r)}_+$ and therefore we obtain (\ref{pos_sub}).

This concludes our proof. 
$\square$
\end{proof}

Condition (\ref{pos}) means that failures do not propagate and companies who fail at the beginning will not come back to the market.
Condition (\ref{pos_sub}) means that failures do not propagate but companies who fail at the beginning can come back to the market. In both cases $\mathcal S^{(r)}$ is positively invariant.

In the following result we establish conditions for stability and obtain the unique equilibrium point in the positive orthant. 

\begin{theorem} \label{trm:analysis_equilirbium}
    Consider dynamics (\ref{eq:error_model_33}), and let (\ref{constraintb>0_1})-(\ref{constraintb>0_1bis}) and (\ref{cil}) hold true. Also let 
     \begin{equation}\label{eq:suffcondition}
        0\leq\sum_{l = 1, \\l\neq i}^n   \hat{c}_{ii}c_{il}\frac{1}{\hat{c}_{ll}} < 1.
    \end{equation}
Then $X(t)$ converges to the following unique equilibrium point:
\begin{equation}   
\label{eq:trm6stabilityeq}
\begin{array}{lll}
    X^{*} = (I-J^{(r)}\hat{C}C\hat{C}^{-1}J^{(r)})^{-1} 
    \cdot
    \Big[J^{(r)}\Big((\hat{C}C\hat{C}^{-1}-I)\underline{v}
    + \hat{C}(Dp - U(J^{(r)}X^{*}) - \mathbf{1}\frac{\beta}{2})\Big)\Big]. 
    \end{array}
\end{equation}
\end{theorem}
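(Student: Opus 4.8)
The plan is to invoke Theorem~\ref{thm1} to establish positivity of the system in the orthant $\mathcal O^{(r)}$, then use condition~(\ref{eq:suffcondition}) to show that the matrix $A^{(r)} = J^{(r)}\hat{C}C\hat{C}^{-1}J^{(r)}$ is convergent (all eigenvalues strictly inside the unit disc), and finally combine these two facts with the standard theory of positive affine systems recalled just before Lemma~\ref{trm:bk} to conclude convergence to a unique fixed point, which must satisfy~(\ref{eq:trm6stabilityeq}).

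First I would note that, since $x = J^{(r)}X(0) \in \mathcal O^{(r)}$ and the hypotheses~(\ref{constraintb>0_1})--(\ref{constraintb>0_1bis}) and~(\ref{cil}) hold, Theorem~\ref{thm1} gives $X(t) \geq 0$ for all $t$; in particular the state never leaves the positive orthant, so the index $r$ stays fixed and the dynamics remains the single affine system $X(t+1) = A^{(r)}X(t) + b^{(r)}(X(t))$ with $A^{(r)} \geq 0$ and $b^{(r)}(X(t)) \geq 0$ throughout. Moreover, once $X$ stays in the positive orthant the nonlinearity $U(J^{(r)}X)$ is constant (equal to $-\mathbf 1\tfrac{\beta}{2}$ on the components in $\mathcal P^{(r)}$ and $+\mathbf 1\tfrac{\beta}{2}$ on those in $\mathcal N^{(r)}$, by~(\ref{eq:ON-OFF})), so $b^{(r)}(X(t)) \equiv b^{(r)}$ is in fact a constant vector and we are dealing with a genuinely linear affine recursion $X(t+1) = A^{(r)}X(t) + b^{(r)}$.

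Next I would argue convergence of $A^{(r)}$. The entries of $A^{(r)}$ have absolute value $|a_{il}^{(r)}| = \hat c_{ii} c_{il} \tfrac{1}{\hat c_{ll}}$ for $i \neq l$ and $a_{ii}^{(r)} = 0$. Hypothesis~(\ref{eq:suffcondition}) says precisely that each row sum $\sum_{l\neq i}\hat c_{ii}c_{il}\tfrac{1}{\hat c_{ll}} < 1$, i.e. $\|A^{(r)}\|_\infty < 1$ (using that $A^{(r)} \geq 0$ under~(\ref{cil}), so the row sums equal the absolute row sums). Since the spectral radius is bounded by any induced norm, $\rho(A^{(r)}) \leq \|A^{(r)}\|_\infty < 1$, so $A^{(r)}$ is convergent and $I - A^{(r)}$ is invertible. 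Then the affine recursion $X(t+1) = A^{(r)}X(t) + b^{(r)}$ yields $X(t) = (A^{(r)})^t X(0) + \sum_{k=0}^{t-1}(A^{(r)})^k b^{(r)} \to 0 + (I-A^{(r)})^{-1}b^{(r)} =: X^*$. Substituting the definitions of $A^{(r)}$ from~(\ref{eqn:structurematrixA^r}) and of $b^{(r)}$ from~(\ref{br}), with $U(J^{(r)}X(t))$ replaced by its limiting constant value $U(J^{(r)}X^*)$, gives exactly~(\ref{eq:trm6stabilityeq}); uniqueness of the equilibrium follows from invertibility of $I - A^{(r)}$, since any equilibrium in the positive orthant satisfies the same fixed-point equation.

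The main obstacle I anticipate is the subtle point that $b^{(r)}(X)$ is state-dependent through $U(J^{(r)}X)$, so strictly speaking~(\ref{eq:error_model_33}) is not an affine system but a piecewise-affine one; the argument only goes through because Theorem~\ref{thm1}'s positive-invariance guarantees the trajectory stays in one region where $U$ is constant. I would take care to make this explicit — that positivity from Theorem~\ref{thm1} is exactly what decouples the nonlinearity and lets the linear convergence theory apply — rather than silently treating the system as affine from the start. A secondary care point is to confirm that~(\ref{eq:suffcondition}) combined with~(\ref{cil}) really does certify $A^{(r)} \geq 0$ with the claimed row sums (this is immediate from Lemma~\ref{trm:pos_analysis_case4} and the entrywise formula), so that the $\|\cdot\|_\infty < 1$ bound is legitimate.
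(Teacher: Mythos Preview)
Your proposal is correct and follows essentially the same route as the paper: invoke Theorem~\ref{thm1} to confine the trajectory to the positive orthant (so the piecewise-affine system reduces to a single affine one with constant $b^{(r)}$), then use the row-sum condition~(\ref{eq:suffcondition}) to bound the spectral radius of $A^{(r)}$ below $1$ and conclude convergence to the unique fixed point $(I-A^{(r)})^{-1}b^{(r)}$. If anything, your treatment of the state-dependence of $b^{(r)}(X)$ through $U(J^{(r)}X)$ is more explicit than the paper's, which simply asserts that ``within each single orthant dynamics~(\ref{eq:error_model_3}) is linear.''
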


\begin{proof}
First let us note that since (\ref{constraintb>0_1})-(\ref{constraintb>0_1bis}) and (\ref{cil}) hold true from Theorem \ref{thm1} we have that system (\ref{eq:error_model_33}) is positive. This means that the state trajectory never leaves the positive orthant, namely condition  (\ref{pos}) holds true. Within each single orthant dynamics (\ref{eq:error_model_3}) is linear and therefore there exists one unique equilibrium point. To obtain such an equilibrium point, note that from (\ref{eq:model_etilde}) we have that
    \begin{equation}\label{proof61000}
    \begin{array}{lll}
        X(t+1) = J^{(r)}\hat{C}C\hat{C}^{-1}J^{(r)}X(t) + J^{(r)}\Big[(\hat{C}C\hat{C}^{-1}-I)\underline{v} + \hat{C}(Dp - U(J^{(r)}X(t)) - \mathbf{1}\frac{\beta}{2})\Big].
        \end{array}
    \end{equation}
    
At the equilibrium it must hold: 
    \begin{equation}\label{proof62}
        X(t+1) = X(t) = X^{*}.
    \end{equation}
    
Now by substituting (\ref{proof62}) in (\ref{proof61000}), we have 
    \begin{equation}\label{proof6100000}
    \begin{array}{lll}
        X^* = J^{(r)}\hat{C}C\hat{C}^{-1}J^{(r)}X^* + J^{(r)}\Big[(\hat{C}C\hat{C}^{-1}-I)\underline{v} + \hat{C}(Dp - U(J^{(r)}X^*) - \mathbf{1}\frac{\beta}{2})\Big].
        \end{array}
    \end{equation}
    Taking all linear terms involving $X^{*}$ to the left-hand-side we obtain
        \begin{equation}\label{proof63}
        \begin{array}{lll}
            (I-J^{(r)}\hat{C}C\hat{C}^{-1}J^{(r)}) X^{*} =  
            J^{(r)}\Big[(\hat{C}C\hat{C}^{-1}-I)\underline{v} + \hat{C}\Big(Dp - U(J^{(r)}X^{*}) - \mathbf{1}\frac{\beta}{2}\Big)\Big]. 
            \end{array}
    \end{equation}
By pre-multiplying by $(I-J^{(r)}\hat{C}C\hat{C}^{-1}J^{(r)})^{-1}$ both left and right-hand side we obtain the equilibrium point in \ref{eq:trm6stabilityeq}.
    
Now it is left to prove that such an equilibrium is asymptotically stable, namely that it holds $\lim_{t \rightarrow  \infty} X (t) = X^* \geq 0$. To do this, it suffices to show that the matrix $A^{(r)}:=J^{(r)}\hat{C}C\hat{C}^{-1}J^{(r)}$ is convergent and we know that this is true if all eigenvalues are in the unit disc, namely $|\lambda|<1$ where $\lambda \in spec(A)$. 
   
The above condition is true if the following holds:
\begin{equation}
       0 \leq  \min_i\sum_{l=1,\\l\neq i}^n | a^{(r)}_{il} | \leq \max_i \sum_{l=1,\\l\neq i}^n | a^{(r)}_{il}| \leq 1.
\end{equation}
    
The above implies that:
\begin{equation}\label{c1}
        0\leq\sum_{l = 1, \\l\neq i}^n |a^{(r)}_{il}| < 1.
\end{equation} 
Now we know that $|a^{(r)}_{ij}|=|j^{(r)}_{ii}\hat{c}_{ii}c_{il}\frac{1}{\hat{c}_{ll}}j^{(r)}_{ll}|= |\hat{c}_{ii}c_{il}\frac{1}{\hat{c}_{ll}}| \textcolor{white}{...} \forall i\neq l$. Condition (\ref{c1}) is then equivalent to condition (\ref{eq:suffcondition}) and this concludes the proof. 
$\square$
\end{proof}

To understand condition (\ref{eq:suffcondition}) let us assume that company $i$ owns $90\%$ of company $j$ and has no share in other companies. Assume also that company $j$ in turns does not invest in other companies, then $C_{ii} = 0.1$ and $\sum_{j=1}^n \frac{C_{ij}}{\hat{C}_{jj}}=0.9$. Then  $\hat{C}_{ii} \Big(\sum_{j=1}^n \frac{C_{ij}}{\hat{C}_{jj}}\Big)= 0.09<1$. From this example we can see that when there is a company that owns a large share of another company which in turn does not invest much in other companies, then the former company represents a stable node in the network.

Condition (\ref{cilsub}) implies that  $A^{(r)}_-$ is a null matrix. We can relax this condition and assume $A^{(r)}_-$ being nonnull. We can still prove positive invariance in the case where $X_-(t)$ is bounded. 

Consider the hyperbox $H:=\{X_- \in \mathbb R^{|\mathcal N^{(r)}|}| \, X_l \leq \bar X_l,\, \forall \, l\in \mathcal N^{(r)} \} \subset \mathbb R^{|\mathcal N^{(r)}|}$ where $\bar X_l$ are given upper bounds. Assume that  $X_-(t) \in H$ for all $t\geq 0$. Also denote $\bar X_-:= [\bar X_l]_{l \in \mathcal N^{(r)}}$.
 
\begin{theorem}\label{thm33}
Consider dynamics (\ref{eq:error_model_33}), and assume $X_-(t) \in H$ for all $t \geq 0$. If 
\begin{equation}\label{pos_sub1}
(Dp)_{i} \geq \Big((\hat{C}^{-1}-C\hat{C}^{-1})\underline{v}\Big)_i -\Big(A^{(r)}_- \bar X_-\Big)_i \forall i \in \mathcal P^{(r)} \end{equation} 
then for all $i \in \mathcal P^{(r)}$
\begin{equation}\label{pos_sub11111}
    X_i(0) \geq  0 \Longrightarrow 
    X_i(t) \geq 0, \quad \mbox{for all $t=0,1,2,\ldots$}.
\end{equation}
\end{theorem}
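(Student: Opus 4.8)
The plan is to follow the second half of the proof of Theorem~\ref{thm1} almost verbatim, the only new ingredient being that the off-block coupling term $A^{(r)}_-X_-(t)$, which there was forced to be null by~(\ref{cilsub}), is now merely bounded from below by means of the hyperbox $H$. Since $\{X_i(t)\ge 0 \ \forall\, i\in\mathcal P^{(r)}\}$ is the same as $X_+(t)\ge 0$, I would prove $X_+(t)\ge 0$ by induction on $t$, the base case $t=0$ being the hypothesis in~(\ref{pos_sub11111}).

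For the inductive step I would split off the $\mathcal P^{(r)}$-block of~(\ref{eq:error_model_33}),
\[
X_+(t+1)=A^{(r)}_+X_+(t)+A^{(r)}_-X_-(t)+b^{(r)}_+(X(t)),
\]
and record three sign facts. First, $A^{(r)}_+\ge 0$ unconditionally: for $i,l\in\mathcal P^{(r)}$ one has $j^{(r)}_{ii}=j^{(r)}_{ll}=1$, so $a^{(r)}_{il}=j^{(r)}_{ii}\hat c_{ii}c_{il}\tfrac{1}{\hat c_{ll}}j^{(r)}_{ll}=\hat c_{ii}c_{il}\tfrac{1}{\hat c_{ll}}\ge 0$ (this is the computation of Lemma~\ref{trm:pos_analysis_case4} restricted to the rows of $\mathcal P^{(r)}$); hence $A^{(r)}_+X_+(t)\ge 0$ by the inductive hypothesis. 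Second, $A^{(r)}_-\le 0$ entrywise, since for $i\in\mathcal P^{(r)}$, $l\in\mathcal N^{(r)}$ one has $j^{(r)}_{ll}=-1$ and therefore $a^{(r)}_{il}=-\hat c_{ii}c_{il}\tfrac{1}{\hat c_{ll}}\le 0$. Third, by the hypothesis $X_-(t)\in H$, i.e.\ $X_l(t)\le\bar X_l$ componentwise; multiplying this inequality by the nonpositive matrix $A^{(r)}_-$ reverses it and gives $A^{(r)}_-X_-(t)\ge A^{(r)}_-\bar X_-$.

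It then remains to control $b^{(r)}_+(X(t))$ under the hypothesis $X_+(t)\ge 0$. For $i\in\mathcal P^{(r)}$ we have $j^{(r)}_{ii}=1$, so $(J^{(r)}X(t))_i=X_i(t)\ge 0$ and, by~(\ref{eq:ON-OFF})--(\ref{eq:bv}), the shifted penalty is zero on that coordinate, $\big(U(J^{(r)}X(t))+\mathbf 1\tfrac{\beta}{2}\big)_i=0$. Repeating the manipulation of the proof of Lemma~\ref{trm:bk} for the case $j^{(r)}_{ii}=1$ (pre-multiplying the $i$-th component by $\hat c_{ii}^{-1}>0$, which preserves its sign) yields $b^{(r)}_i(X(t))=\hat c_{ii}\big[(Dp)_i-\big((\hat C^{-1}-C\hat C^{-1})\underline v\big)_i\big]$, a value that does not depend on $X(t)$ as long as $X_i(t)\ge 0$. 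Since the $i$-th row of $A^{(r)}_-$ carries the same positive factor $\hat c_{ii}$, after dividing by $\hat c_{ii}>0$ the inequality $b^{(r)}_i(X(t))+(A^{(r)}_-\bar X_-)_i\ge 0$ is exactly hypothesis~(\ref{pos_sub1}) (handled, as in Lemma~\ref{trm:bk}, up to the positive diagonal rescaling by $\hat C$); hence $b^{(r)}_+(X(t))+A^{(r)}_-\bar X_-\ge 0$. Combining the three sign facts,
\[
X_+(t+1)=A^{(r)}_+X_+(t)+A^{(r)}_-X_-(t)+b^{(r)}_+(X(t))\ \ge\ A^{(r)}_-\bar X_-+b^{(r)}_+(X(t))\ \ge\ 0,
\]
which closes the induction and establishes~(\ref{pos_sub11111}).

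The only real subtlety I anticipate is orienting the hyperbox bound correctly: because $A^{(r)}_-$ is nonpositive, it is the \emph{upper} bound $\bar X_-$ provided by $H$ --- not a lower bound --- that yields a \emph{lower} bound on the coupling term, so the worst case for the positivity of $X_+$ is attained at $X_-=\bar X_-$; this is precisely why $H$ is defined through upper bounds only. All the remaining steps are the bookkeeping of Lemmas~\ref{trm:bk} and~\ref{trm:pos_analysis_case4}, carried over with the additive term $A^{(r)}_-\bar X_-$ in the role previously played by $0$.
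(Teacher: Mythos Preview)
Your proposal is correct and follows essentially the same route as the paper's proof: both split the dynamics into the $\mathcal P^{(r)}$-block, use $A^{(r)}_+\ge 0$, derive from~(\ref{pos_sub1}) that $b^{(r)}_+(X(t))\ge -A^{(r)}_-\bar X_-$ via the computation of Lemma~\ref{trm:bk}, and then combine with the hyperbox bound to conclude positivity. You are in fact slightly more explicit than the paper on two points the paper leaves implicit: you state and verify that $A^{(r)}_-\le 0$ entrywise, and you spell out why the \emph{upper} bound $X_-(t)\le\bar X_-$ together with $A^{(r)}_-\le 0$ yields the \emph{lower} bound $A^{(r)}_-X_-(t)\ge A^{(r)}_-\bar X_-$ needed to close the argument --- the paper jumps directly from $b^{(r)}_+\ge -A^{(r)}_-\bar X_-$ to $A^{(r)}_-X_-+b^{(r)}_+\ge 0$ without this intermediate step. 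Your inductive framing, and the observation that the penalty term vanishes on $\mathcal P^{(r)}$ precisely because the inductive hypothesis guarantees $X_i(t)\ge 0$ there, are likewise additions that tighten the exposition but do not change the strategy.
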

        
\begin{proof}
Let us consider dynamics
\begin{equation}\label{pos_sub1111}X_+(t+1) = A^{(r)}_+ X_+(t) + A^{(r)}_- X_-(t) + b^{(r)}_+(X(t)).
\end{equation}
We know that $A_+^{(r)} \geq 0$. 

%%%%%%
Our aim is now to show that 
$$A^{(r)}_- X_-(t) + b^{(r)}_+(X(t)) \geq 0.$$
To do this, we recall that for all $i \in \mathcal P^{(r)}$, $$b^{(r)}_i(X)\geq -\Big(A^{(r)}_- \bar X_-\Big)_i$$ is true if and only if
\begin{equation}
        \begin{array}{r}
        \Big((\hat{C}C\hat{C}^{-1}-I)\underline{v} + \hat{C} \Big(Dp - U(J^{(r)}X) - \mathbf{1}\frac{\beta}{2}\Big) \Big)_i
        \geq -\Big(A^{(r)}_- \bar X_-\Big)_i.
        \end{array}
\end{equation}
The above inequality is true if and only if 
\begin{align}
        \begin{array}{cc}
             &\Big(\hat{C}^{-1} (\hat{C}C\hat{C}^{-1}-I)\underline{v} + \hat{C}^{-1} \hat{C}\Big(Dp - U(J^{(r)}X) - \mathbf{1}\frac{\beta}{2} \Big) \Big)_i  \\
             &=\Big( (C\hat{C}^{-1}-\hat{C}^{-1})\underline{v} + Dp - U(J^{(r)}X) - \mathbf{1}\frac{\beta}{2} \Big)_i
             \geq -\hat{C}^{-1}_{ii} \Big(A^{(r)}_- \bar X_-\Big)_i.
        \end{array}    
\end{align}
Now, for all  $i \in \mathcal P^{(r)}$ then we have $U(J^{(r)}X) + \mathbf{1}\frac{\beta}{2}=0$ and for the above we obtain  
\begin{equation}\label{eq:dpbr3}
         (Dp)_{i} \geq \Big(-(C\hat{C}^{-1}-\hat{C}^{-1})\underline{v}\Big)_i-\hat{C}^{-1}_{ii}\Big(A^{(r)}_- \bar X_-\Big)_i,
\end{equation}
which corresponds to  (\ref{pos_sub1}). 
     
%%%%%%

To complete the proof it is left to notice that condition (\ref{pos_sub1}) also implies \begin{equation}
\label{pos_sub11}
b^{(r)}_+(X(t)) \geq -A^{(r)}_- \bar X_-. 
\end{equation} 
From (\ref{pos_sub11}) we also have $$A^{(r)}_- X_-(t) + b^{(r)}_+(X(t)) \geq 0,$$
and therefore dynamics (\ref{pos_sub1111})
satisfies the positive invariance condition 
\begin{equation}\label{pos_sub111}
    X_+(0) \geq  0 \Longrightarrow 
    X_+(t) \geq 0, \quad \mbox{for all $t=0,1,2,\ldots$}.
\end{equation}
The above condition coincides with (\ref{pos_sub11111}). This concludes our proof. 
$\square$
\end{proof}

\subsection{Large-scale network.}

Theorem \ref{thm33} has a nice counterpart result in the case of a large-scale network. For the sake of simplicity, assume homogeneity in the shares, namely for a given $0<\alpha<1$,  
\begin{equation}\nonumber
a_{ij}=\left\{
\begin{array}{lll}
\alpha & \mbox{with prob. $\tilde \Theta_i \in [0,1]$}\\ 
0 & \mbox{with prob. $1-\tilde \Theta_i$}
\end{array}
\right., \quad \forall i\in \mathcal P^{(r)}, \, j\in \mathcal N^{(r)}.
\end{equation}
Then we have
$$A^{(r)}_-:=[a_{ij}^{(r)}\in\{0,\alpha\}]_{i\in \mathcal P^{(r)}, \, j\in \mathcal N^{(r)}} \in \{0,\alpha\}^{ |\mathcal P^{(r)}| \times |\mathcal N^{(r)}|}.$$

Note that $\tilde \Theta_i  n$ is the average connectivity (degree) of the network. 

Also assume that for all $j \in \mathcal N^{(r)}$  $\bar X_j=\bar \xi$. Then we can obtain the maximum number of links (cross-sharing) to failed company for which company $i$ is still safe, and this value is given by
$$\hat k_i =\Big \lfloor \frac{b_i^{(r)}(X)}{\alpha \bar \xi} \Big\rfloor.$$
It is worth mentioning that the above condition is obtained by taking into account a worst case analysis, since it derives from (\ref{pos_sub1}) once we replace $\Big(A^{(r)}_- \bar X_-\Big)_i=\tilde n \alpha \bar \xi$ and check for the maximum $\tilde n$ for which (\ref{pos_sub1}) is satisfied. 

The probability of a link to a failed node for each company $i \in \mathcal P^{(r)}$ is
$$\Theta = \frac{|\mathcal N^{(r)}|}{|\mathcal P^{(r)}|+|\mathcal N^{(r)}|}= \frac{|\mathcal N^{(r)}|}{n}.$$

Let $\Delta_i$ be the degree of company $i$, which is given by $$\Delta_i = \tilde \Theta_i n.$$ In other words, for large $n$ the probability of creating a link is equal to the number of links divided by the number of companies, i.e., $\tilde \Theta_i =\frac{\Delta_i}{n}$. Let $\binom{n}{k} = \frac{n!}{k!(n-k)!}$. Then we have
$$\mbox{Pr \Big\{$\Big(A^{(r)}_- \bar X_-\Big)_i\leq \hat k_i \alpha \bar \xi$\Big\} }= \sum_{j=0}^{\hat k_i} \binom{\Delta_i}{j} \Theta^j_i (1-\Theta_i)^{\Delta_i-j}.$$
The above is obtained from the fact that the probability of having $j$ links to failed companies follows the binomial distribution 
$$\mbox{Pr \Big\{$\Big(A^{(r)}_- \bar X_-\Big)_i = j \alpha \bar \xi$\Big\} }=  \binom{\Delta_i}{j} \Theta^j (1-\Theta)^{\Delta_i-j}.$$
The probability of having at most $\hat k$ links to failed companies is then obtained by calculating the cumulative distribution function of the latter. 

Define function 
\begin{equation}\label{hatF}
\hat F_n(  \tau) = \sum_{i=1}^n  \mathbf 1_{\{\hat k_i < \tilde \Theta_i   \tau\}}, \quad \forall \tau=0,\ldots,n
\end{equation}
where $\mathbf 1_{\{\hat k_i < \tilde \Theta_i   \tau\}}$ is the indicator of event $\hat k_i < \tilde \Theta_i   \tau$, namely 

\begin{equation}
\mathbf 1_{\{\hat k_i < \tilde \Theta_i   \tau\}}=
\left\{\begin{array}{ll}
    1 & \mbox{if $\hat k_i < \tilde \Theta_i   \tau$},\\
    0 & \mbox{otherwise.}
    \end{array}\right.
\end{equation}  

For a population of failed companies of size $ \tau$, $\tilde \Theta_i  \tau$ is the expected number of links that a single company $i$ has to failed neighbour companies. Then $\hat F_n( \tau)$ is the number of companies characterized by a safety threshold $\hat k$ which is smaller than the expected number of links to failed companies $\tilde \Theta_i  \tau$.  
We can obtain the estimate of an upper bound of the number of failed companies as follows:
\begin{equation}\label{est}
\widehat{|\mathcal N^{(r)}|}:=\max_{ \tau=0,\ldots,n} \{\hat F_n(  \tau) \geq  \tau\}.
\end{equation}

It is apparent that $\widehat{|\mathcal N^{(r)}|}$ grows with increasing average connectivity $\tilde \Theta n$.

\section{Control Design.}

In this section, we build on previous results on conditions for system~(\ref{eq:error_model_33})  to be positive and stable, to design an external input to guarantee such conditions. The external input can be viewed as a discrete-time control signal and can be interpreted as an investment strategy on external assets. Namely, we design a feasible investment on external assets that guarantee the system to be positive and stable, which means that cascading failures are prevented.  

\subsection{Feedback-feedforward structure.}
\label{sec4.1:notation and background}

We recall that the physical interpretation of the control is in terms of the investments in external assets $Dp$. The correct investments in external assets ensures that the market values of the non-failing companies remain above the threshold market values. We partition control $Dp$ into two parts: 

\begin{itemize}
    \item [\textit{i)}] A constant feedforward part $u^{(1)}$ to ensure that the input vector $b^{(r)}$ is non-negative,
    \item [\textit{ii)}] A dynamical feedback part $u^{(2)}$ to ensure that state space matrix $A^{(r)}$ is positive and satisfies the sufficient condition for stability $0\leq\sum_{l = 1}^n a_{il}^{(r)} < 1$.
\end{itemize}
Then, we have
\begin{align}\label{eq:Dpstart}
     Dp = u^{(1)}&+u^{(2)}, \\ 
     \text{Where:} \textcolor{white}{...}u^{(1)} &= \underline{u}^{(r)}, \\ 
     u^{(2)} &= K^{(r)}X(t).\label{eq:Dpstart3}
\end{align}

It is important to mention that the design of the control signal in terms of investments in external assets is performed at each time period $t$. From (\ref{eq:error_model_33}), replacing the term on external investment $Dp$ by the  feedback-feedforward input as in (\ref{eq:Dpstart})-(\ref{eq:Dpstart3}) yields the following closed-loop dynamical system: 
\begin{equation}\label{eq:tilde_error_control1}
        \left\{ \begin{array}{ll} X(t+1) =(A^{(r)}+J^{(r)}\hat{C}K^{(r)})X(t)  + J^{(r)}((\hat{C}C\hat{C}^{-1}-I)\underline{v} + \hat{C}(\underline{u}^{(r)} - U(J^{(r)}X) - \mathbf{1}\frac{\beta}{2}),\\
        v(t) = J^{(r)}X(t)+\underline{v}.
    \end{array}\right.
\end{equation}

Let $K^{(r)} := J^{(r)}\Tilde{K}^{(r)}$, then the feedback matrix is  $J^{(r)}\hat{C}J^{(r)}\Tilde{K}^{(r)}=J^{(r)}\hat{C}K^{(r)}$ and the closed-loop matrix
$\Tilde{A}^{(r)}$ is:% defined as:
\begin{equation}\label{ta}
    \begin{array}{lll}
    \tilde{A}^{(r)}  := [\tilde a_{ij}]_{i,j=1,\ldots,n}
     = (A^{(r)}+\hat{C}\Tilde{K}^{(r)})
     =\begin{bmatrix}\hat{c}_{11}\Tilde{k}_{11} & \ldots & a_{1n}+\hat{c}_{11}\Tilde{k}_{1n}\\\vdots & \ddots & \vdots\\
    a_{n1}+\hat{c}_{nn}\Tilde{k}_{n1} &  \ldots & \hat{c}_{nn}\Tilde{k}_{nn}\\\end{bmatrix}.
\end{array}
\end{equation}

Let us also introduce the vector 
\begin{equation}
    \label{tb}
\tilde{b}^{(r)}:=J^{(r)}\Big((\hat{C}C\hat{C}^{-1}-I)\underline{v} + \hat{C}(\underline{u}^{(r)} - U(J^{(r)}X) - \mathbf{1}\frac{\beta}{2})\Big).\end{equation}

Note that $\tilde{b}^{(r)}$ in (\ref{tb}) differs from ${b}^{(r)}$ in (\ref{br}) as the external assets' investment term $Dp$ is now replaced by the yet to be designed control input $\underline{u}^{(r)}$.
System (\ref{eq:tilde_error_control1}) can be rewritten in compact way as follows:
\begin{equation}
\label{manuscriptmodel}
    \left\{ \begin{array}{ll} X(t+1) = \tilde{A}^{(r)}X(t)+\tilde{b}^{(r)},\\
 v(t)= J^{(r)}X(t)+\underline{v}.\end{array} \right.
\end{equation}

In the following of this manuscript, system (\ref{manuscriptmodel}) is referred to as the closed-loop dynamical system and all results will relate to it. To visualize the dynamics of the closed-loop dynamical system, a block diagram of the model is represented in \ref{fig:feed}.

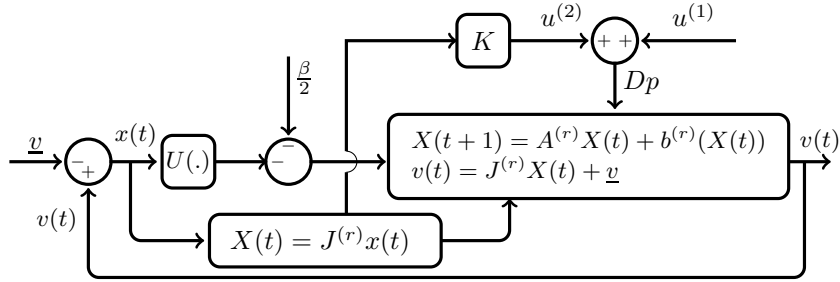
\begin{figure} [h]
\centering
\begin{tikzpicture}[scale=0.70]%[overlay]
\begin{scope}[shift={(0,0)},scale=1]
 \draw [rounded corners, very thick](-0.3,0) rectangle (7.3,1.6);
 \node at (3.5,.8) {\footnotesize 
 \(\displaystyle 
 %\left\{
 \begin{array}{ll}
 X(t+1) = A^{(r)}X(t)+b^{(r)}(X(t))\\
 v(t)= J^{(r)}X(t) + \underline{v}
 \end{array}
 %\right.
 \)
 };
 \draw[->, very thick](7.3,.7)--(8.1,.7);\node at (7.9,1.1) {\small $v(t)$};
 \draw[->,rounded corners, very thick] (7.6,.7)--(7.6,-1.5)--(-6,-1.5)--(-6,.2);\node at (-6.6,-0.4) {\small $v(t)$};
 \draw[->,rounded corners, very thick] (-5.2,.7)--(-5.2,-0.8)--
(-3.8,-0.8);
\draw[->,rounded corners, very thick]
(.7,-0.8) --(2,-0.8)--(2,0);\node at (-1.6,-0.8) {$X(t) = J^{(r)}x(t)$};
 \draw[black,rounded corners, very thick ] (-3.7,-1.3) rectangle (0.7,-0.3);
\draw[very thick ] (-6,.7) circle (.43cm);\node at (-6.0,0.5) {\tiny $+$};\node at (-6.2,0.7) {\tiny $-$};
\draw[->, very thick](-7.5,.7)--(-6.5,.7);\node at (-7.0,1.0) {$\underline v $};
\draw[->, very thick ](-5.6,.7)--(-4.7,.7);\node at (-5.1,1.2) {\footnotesize $x(t)$};
\draw[black,rounded corners, very thick ] (-4.6,0.2) rectangle (-3.6,1.2);\node at (-4.1,.7) {\footnotesize $U(.)$};
%{\begin{tabular}{cc} ON- \\ OFF \end{tabular}};
\draw[->, very thick ](-3.6,.7)--(-2.6,.7);
\draw [very thick ](-2.2,.7) circle (.43cm);\node at (-2.4,0.7) {\tiny $-$};\node at (-2.2,0.99) {\tiny $-$};
\draw [->, very thick ] (-2.2,2.7)-- (-2.2,1.2);\node at (-1.9,2.3) {$\frac{\beta}{2}$};
\draw [->, very thick ] (-1.8,.7)-- (-0.35,.7);%\node at (-1.1,1.2) {\footnotesize $-\phi(v)$};
\draw [->, very thick] (4.0,2.6)-- (4.0,1.7);\node at (4.5,2.2) {$Dp$};
\draw [->, very thick] (-1.1,0.9)-- (-1.1,3.0)--(1.0,3.0);
\draw [ very thick] (-1.1,-0.3)-- (-1.1,0.5);
\draw (-1.1,0.5) arc
    [
        start angle=-90,
        end angle=90,
        x radius=.2cm,
        y radius =.2cm
    ] ;
 
\draw[black,rounded corners, very thick ] (1.0,2.5) rectangle (2.0,3.5);\node at (1.5,3) {$K$};
\draw [->, very thick] (2,3.0)--(3.5,3.0);
\draw[very thick ] (4,3.0) circle (.43cm);\node at (3.8,3.0) {\tiny $+$};\node at (4.2,3) {\tiny $+$};
\draw [<-, very thick] (4.5,3.0)--(6.3,3.0);
\node at (3.0,3.5) {$u^{(2)}$};
\node at (5.5,3.5) {$u^{(1)}$};
\end{scope}
\end{tikzpicture}
\vspace{5mm}
\caption{Feedback-feedforward control on assets' investments.} 
\label{fig:feed}
\end{figure}

\subsection{Ensuring $\tilde{b}^{(r)}$ is non-negative.}
\label{sec5.2.1}

In this section, the constant feedforward control $u^{(1)}=\underline{u}^{(r)}$ is designed to ensure that $\tilde{b}^{(r)}$ is non-negative.
\begin{theorem}
\label{trm:u1}
Consider system (\ref{manuscriptmodel}), and let us design     \begin{equation}\label{eq:u1}
\underline{u}^{(r)} = (\hat{C}^{-1}- C\hat{C}^{-1})\underline{v}+ U(J^{(r)}X) + \mathbf{1}\frac{\beta}{2}+J^{(r)} \xi, \quad \xi>0, 
\end{equation}
then vector $\tilde{b}^{(r)}$ is non-negative.
\end{theorem}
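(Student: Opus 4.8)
The plan is to substitute the proposed feedforward input \eqref{eq:u1} directly into the definition \eqref{tb} of $\tilde b^{(r)}$ and verify that the resulting vector is positive (hence non-negative). Recall that in \eqref{tb} the term $Dp$ has been replaced by $\underline u^{(r)}$, so the substitution amounts to replacing $\underline u^{(r)}$ by the right-hand side of \eqref{eq:u1} inside the bracket $\big(\hat C C \hat C^{-1} - I\big)\underline v + \hat C\big(\underline u^{(r)} - U(J^{(r)}X) - \mathbf 1\frac{\beta}{2}\big)$.

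First I would pre-multiply the bracket by $\hat C^{-1}$ (which, since $\hat C$ is positive and diagonal, preserves signs componentwise, exactly as used in the proof of Lemma~\ref{trm:bk}), rewriting the bracket as $\hat C\big[(C\hat C^{-1} - \hat C^{-1})\underline v + \underline u^{(r)} - U(J^{(r)}X) - \mathbf 1\frac{\beta}{2}\big]$. Then I would plug in \eqref{eq:u1}: the three terms $(\hat C^{-1} - C\hat C^{-1})\underline v$, $U(J^{(r)}X)$ and $\mathbf 1\frac{\beta}{2}$ in $\underline u^{(r)}$ cancel exactly against the corresponding terms $(C\hat C^{-1} - \hat C^{-1})\underline v$, $-U(J^{(r)}X)$ and $-\mathbf 1\frac{\beta}{2}$ already present in the bracket, leaving only $\hat C\, J^{(r)}\xi$ inside. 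Hence $\tilde b^{(r)} = J^{(r)}\hat C J^{(r)}\xi$.

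It then remains to observe that $J^{(r)} \hat C J^{(r)} = \hat C$: since $J^{(r)}$ is diagonal with entries $\pm 1$ and $\hat C$ is diagonal, $J^{(r)}\hat C J^{(r)}$ is diagonal with entries $(j^{(r)}_{ii})^2 \hat c_{ii} = \hat c_{ii} > 0$. Therefore $\tilde b^{(r)} = \hat C\,\xi$, and since $\hat C$ is a positive diagonal matrix and $\xi > 0$ by hypothesis, we conclude $\tilde b^{(r)} > 0 \geq 0$, i.e.\ $\tilde b^{(r)}$ is non-negative, which is the claim.

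There is essentially no hard step here: the result is a design-by-construction statement, and the only things to be careful about are the sign-preservation argument for the $\hat C^{-1}$ pre-multiplication (already established in Lemma~\ref{trm:bk}) and the identity $J^{(r)}\hat C J^{(r)} = \hat C$ for diagonal matrices. The mild subtlety worth a sentence is that $U(J^{(r)}X)$ appears both in $\underline u^{(r)}$ and in $\tilde b^{(r)}$ evaluated at the \emph{same} argument $X$, so the cancellation is exact and no assumption on which orthant $J^{(r)}X$ lies in is needed.
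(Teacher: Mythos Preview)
Your proof is correct and in fact more direct than the paper's. The paper proceeds by invoking the sufficient conditions of Lemma~\ref{trm:bk} with $(Dp)_i$ replaced by $\underline u_i^{(r)}$, obtaining the two inequalities~(\ref{constraintb>0_1u})--(\ref{constraintb>0_1bisu}), and then verifies each of them componentwise by a case split on $i\in\mathcal N^{(r)}$ versus $i\in\mathcal P^{(r)}$, using the sign of $j_{ii}^{(r)}\xi_i$ in each case. You instead substitute \eqref{eq:u1} directly into \eqref{tb}, cancel terms, and obtain the closed-form $\tilde b^{(r)}=J^{(r)}\hat C J^{(r)}\xi=\hat C\,\xi>0$. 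Your route is shorter and yields the explicit value of $\tilde b^{(r)}$ (which the paper in fact records immediately after its proof, in deriving~(\ref{eq:tilde_error_controlu12})); the paper's route has the pedagogical merit of making explicit how the design \eqref{eq:u1} was reverse-engineered from the conditions of Lemma~\ref{trm:bk}. One small remark: the ``sign-preservation under $\hat C^{-1}$'' comment is not actually needed in your argument---you are merely factoring $\hat C$ out of the bracket, which is pure algebra, not an inequality step.
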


\begin{proof}
From Lemma~\ref{trm:bk} it is known that the following constraints should hold for $\tilde{b}^{(r)}$ to be non-negative:
\begin{eqnarray}
\label{constraintb>0_1u}
             \underline u_{i}^{(r)} \leq \Big((\hat{C}^{-1}-C\hat{C}^{-1})\underline{v}\Big)_i +\beta \qquad \text{ if } i \in \mathcal N^{(r)},
                \\
    \label{constraintb>0_1bisu}
                \underline u_{i}^{(r)} \geq \Big((\hat{C}^{-1}-C\hat{C}^{-1})\underline{v}\Big)_i  \qquad \text{if }  i \in \mathcal P^{(r)}.
\end{eqnarray}
   
Substituting (\ref{eq:u1}) in  (\ref{constraintb>0_1u}) we have that for all $i \in \mathcal N^{(r)}$: $$\Big((\hat{C}^{-1}-C\hat{C}^{-1})\underline{v}\Big)_i +\beta +j_{ii}^{(r)} \xi_i \leq \Big((\hat{C}^{-1}-C\hat{C}^{-1})\underline{v}\Big)_i +\beta,$$
which is satisfied as $j_{ii}^{(r)} \xi_i<0$.    

Analogously, substituting (\ref{eq:u1}) in  (\ref{constraintb>0_1bisu}) we have that for all $i \in \mathcal P^{(r)}$: $$\Big((\hat{C}^{-1}-C\hat{C}^{-1})\underline{v}\Big)_i  +j_{ii}^{(r)} \xi_i \geq \Big((\hat{C}^{-1}-C\hat{C}^{-1})\underline{v}\Big)_i,$$
which is satisfied as $j_{ii}^{(r)} \xi_i>0$.
$\square$
\end{proof}
  
The above designed control $u^{(1)}$ once substituted in system~(\ref{manuscriptmodel}) yields the following dynamics:
\begin{equation}\label{eq:tilde_error_controlu12}
        \left\{ \begin{array}{ll} X(t+1) =(A^{(r)}+J^{(r)}\hat{C}K^{(r)})X(t) + J^{(r)}\hat{C} J^{(r)}\xi,\\
        v(t) = J^{(r)}X(t)+\underline{v},
    \end{array}\right.
\end{equation} 
where $\tilde{b}^{(r)}=\hat{C} J^{(r)}x^{(r)}\geq0$. 

\begin{theorem}
\label{trm:u1_pos}
Consider system (\ref{manuscriptmodel}), and assume $X_-(t) \in H$ for all $t \geq 0$. Let us design   $\forall i \in \mathcal P^{(r)}$   \begin{equation}\label{eq:u1_pos}
\underline{u}^{(r)}_i =
\Big((\hat{C}^{-1}-C\hat{C}^{-1})\underline{v}\Big)_i -\hat{C}^{-1}_{ii}\Big(A^{(r)}_- \bar X_-\Big)_i 
+\xi_i, \quad \xi_i>0, 
\end{equation}
then for all $i \in \mathcal P^{(r)}$
\begin{equation}\label{pos_sub11111des}
    X_i(0) \geq  0 \Longrightarrow 
    X_i(t) \geq 0, \quad \mbox{for all $t=0,1,2,\ldots$}.
    \end{equation}
\end{theorem}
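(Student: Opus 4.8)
The plan is to reduce Theorem~\ref{trm:u1_pos} to Theorem~\ref{thm33}, by checking that the feedforward input $\underline{u}^{(r)}$ designed in~(\ref{eq:u1_pos}) --- which, in the closed loop, plays exactly the role that $Dp$ played in the open-loop model --- makes the sufficient condition of Theorem~\ref{thm33} hold componentwise on $\mathcal P^{(r)}$.

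First I would restrict the closed-loop dynamics~(\ref{manuscriptmodel}) to the components indexed by $\mathcal P^{(r)}$, writing
\[
X_+(t+1) = \tilde A^{(r)}_+ X_+(t) + \tilde A^{(r)}_- X_-(t) + \tilde b^{(r)}_+(X(t)),
\]
with $\tilde A^{(r)}=[\tilde A^{(r)}_+\mid \tilde A^{(r)}_-]$ partitioned according to $\mathcal P^{(r)},\mathcal N^{(r)}$ as in the statement. Since $A^{(r)}_+$ is nonnegative (Lemma~\ref{trm:pos_analysis_case4}) and the feedback gain does not spoil the nonnegativity of the $\mathcal P^{(r)}$-block, we have $\tilde A^{(r)}_+\geq 0$; hence, starting from $X_+(0)\geq 0$, a straightforward induction shows that $\tilde A^{(r)}_- X_-(t) + \tilde b^{(r)}_+(X(t))\geq 0$ for all $t$ already implies $X_+(t)\geq 0$ for all $t$, which is exactly~(\ref{pos_sub11111des}). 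So the whole problem reduces to guaranteeing $\tilde b^{(r)}_i(X)\geq -\big(A^{(r)}_-\bar X_-\big)_i$ for all $i\in\mathcal P^{(r)}$, exploiting $X_-(t)\in H$.

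Next I would rerun the sign-preserving chain of equivalences from the proof of Theorem~\ref{thm33}: pre-multiplying the $i$-th row by the positive diagonal entry $\hat C^{-1}_{ii}$ keeps signs, and using that $\big(U(J^{(r)}X)+\mathbf 1\tfrac{\beta}{2}\big)_i=0$ for $i\in\mathcal P^{(r)}$, the required bound becomes, with $\underline u^{(r)}_i$ in place of $(Dp)_i$,
\[
\underline u^{(r)}_i \;\geq\; \Big((\hat C^{-1}-C\hat C^{-1})\underline v\Big)_i-\hat C^{-1}_{ii}\Big(A^{(r)}_-\bar X_-\Big)_i .
\]
Substituting the design~(\ref{eq:u1_pos}), the left-hand side equals the right-hand side plus $\xi_i$, so since $\xi_i>0$ the inequality holds (strictly). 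Combined with $\tilde A^{(r)}_+\geq 0$ and $X_-(t)\in H$ for all $t$, Theorem~\ref{thm33} then delivers~(\ref{pos_sub11111des}).

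I expect the only delicate point to be the bookkeeping of the feedback term $u^{(2)}=K^{(r)}X(t)$: one must argue either that it is absorbed into $\tilde A^{(r)}$ (so that the bound required of $\tilde b^{(r)}$, which depends only on the constant part $\underline u^{(r)}$, is the same as in Theorem~\ref{thm33}), or that for the purposes of this theorem the feedforward design is what matters while the nonnegativity of $\tilde A^{(r)}_+$ is deferred to the feedback-design step. Everything else is the routine manipulation already used in Lemma~\ref{trm:bk} and Theorem~\ref{thm33}, with the strictly positive slack $\xi_i$ doing the work of turning the sharp constraint~(\ref{pos_sub1}) into an inequality that is satisfied by construction.
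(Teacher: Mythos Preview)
Your proposal is correct and follows essentially the same route as the paper: reduce to Theorem~\ref{thm33} by checking that the designed $\underline u^{(r)}_i$ satisfies the componentwise sufficient condition~(\ref{pos_sub1}) (with $\underline u^{(r)}_i$ playing the role of $(Dp)_i$), then substitute~(\ref{eq:u1_pos}) and observe that the slack $\xi_i>0$ closes the inequality. The paper's proof is terser---it simply quotes the bound from Theorem~\ref{thm33} and substitutes---but the content is the same; your explicit caveat about the feedback term $u^{(2)}$ is well placed, since the paper treats the feedforward and feedback designs as decoupled and is silent on this point as well.
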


\begin{proof}
From Theorem \ref{thm33}, $\forall i \in \mathcal P^{(r)}$ we have that $\tilde{b}^{(r)}_i$ is non-negative if:
       \begin{eqnarray}
    \label{constraintb>0_1bisu_pos}
                \underline u_{i}^{(r)} \geq \Big((\hat{C}^{-1}-C\hat{C}^{-1})\underline{v}\Big)_i -\hat{C}^{-1}_{ii}\Big(A^{(r)}_- \bar X_-\Big)_i.
\end{eqnarray}
Substituting (\ref{eq:u1_pos}) in  (\ref{constraintb>0_1bisu_pos}) we have that for all $i \in \mathcal P^{(r)}$: \begin{equation}
\begin{array}{l}
\Big((\hat{C}^{-1}-C\hat{C}^{-1})\underline{v}\Big)_i -\hat{C}^{-1}_{ii}\Big(A^{(r)}_- \bar X_-\Big)_i 
+\xi_i 
\geq \Big((\hat{C}^{-1}-C\hat{C}^{-1})\underline{v}\Big)_i -\hat{C}^{-1}_{ii}\Big(A^{(r)}_- \bar X_-\Big)_i,
\end{array}
\end{equation}
which is satisfied as $\xi_i>0$.
$\square$
\end{proof}

\subsection{Ensuring $\tilde{A}^{(r)}$ positive and stable.}
\label{sec5.2.2}

In this section, the dynamical feedback control $u^{(2)}$ is designed, to ensure that $\tilde{A}^{(r)}$ is positive and guarantees that the sufficient condition for stability in Theorem \ref{trm:analysis_equilirbium} holds.  As previously stated, we assume that $K^{(r)}:=J^{(r)}\Tilde{K}^{(r)}$. Matrix $\Tilde{K}^{(r)}$ is hereafter referred to as the control input matrix. To find the entries of the control input matrix, the following linear program is developed.

Let us introduce the following new notation 
\begin{equation}\nonumber
    \Tilde{k} := vec(\Tilde{K}) =[\Tilde{K}_{\bullet 1}^T \ldots \Tilde{K}_{\bullet n}^T]^T, \quad \delta := \Big[\frac{a_{\bullet 1}}{\hat{c}_{11}}^T \ldots \frac{a_{\bullet n}}{\hat{c}_{nn}}^T \Big]^T. 
\end{equation}
Note that  $\Tilde{k}$ is the vectorization of $\Tilde{K}$ obtained by stacking the columns of the matrix $\Tilde{K}$ on top of one another. Also, let $ e_i$ be the $i$-th canonical basis vector for the $n$-dimensional space, i.e.,  $ e_i:=[0\ldots 0 \, 1 \, 0 \ldots 0]^T$ and let us denote  
$$\mathbf e_i := e_i\otimes \mathbf 1_n=[\mathbf 0^T\ldots \mathbf 0^T \, \mathbf 1^T_n  \, \mathbf 0^T \ldots \mathbf 0^T]^T.$$

Furthermore, for $\epsilon>0$, let us denote 
$$\gamma_i:=\frac{1}{\hat{c}_{ii}}(1 - \displaystyle\sum\limits_{l=1}^{n} a_{il}) -\epsilon,  \quad \mu_i:=\frac{1}{\hat{c}_{ii}}\displaystyle\sum\limits_{l=1}^{n} a_{il}.$$

%$$\mu_i:=\frac{\displaystyle\sum\limits_{l=1}^{n} a_{il}}{\hat{c}_{ii}}$$
The linear program is given by
\begin{eqnarray}\label{lp:LPk}
\mbox{(LP1)}~~~~\text{min}  & -\displaystyle \mathbf 1^T_{n^2} \Tilde{k} \\
%\nonumber \\
\label{LP1} \text{s.t.} & -\Tilde{k} \leq  \delta \\
% \nonumber \\
\label{LP2}& \displaystyle \mathbf e_i^T \Tilde{k} \leq \gamma_i  & 
\forall i=1,\ldots,n \\
%\nonumber \\
\label{LP3}& -\mathbf e_i^T \Tilde{k} \leq \mu_i & \forall i=1,\ldots,n\\
%\nonumber\\
\label{LP4}& \Tilde{k}_{il} = 0 &\forall i=l. 
\end{eqnarray}

\begin{theorem}\label{trm:LPk}
Consider system (\ref{manuscriptmodel}), and let $\Tilde{K}^{(r)}$ be designed according to (\ref{lp:LPk})-(\ref{LP4}), then matrix $\tilde{A}^{(r)}$ is positive, and satisfies the sufficient conditions for stability in  (\ref{eq:suffcondition}).
\end{theorem}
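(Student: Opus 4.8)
The plan is to verify directly that each of the linear program's constraints, when satisfied, enforces one of the two desired structural properties of $\tilde{A}^{(r)}$: non-negativity of all entries, and the row-sum bound $0 \le \sum_{l=1}^n \tilde a_{il}^{(r)} < 1$. Recall from (\ref{ta}) that $\tilde a_{il} = a_{il} + \hat c_{ii}\tilde k_{il}$ for $i \ne l$ and $\tilde a_{ii} = \hat c_{ii}\tilde k_{ii}$, and that in the vectorized notation $\tilde k = vec(\tilde K)$ the entry $\tilde k_{il}$ is picked out appropriately while $\delta$ stacks the scaled columns $a_{\bullet l}/\hat c_{ll}$. First I would translate each group of constraints back into componentwise statements about $\tilde a_{il}$.

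The key steps, in order. First, constraint (\ref{LP4}) gives $\tilde k_{ii} = 0$, hence $\tilde a_{ii} = 0 \ge 0$; combined with the diagonal structure this just says the closed-loop matrix still has zero diagonal, consistent with the form in (\ref{eqn:structurematrixA^r}). Second, constraint (\ref{LP1}), $-\tilde k \le \delta$, unpacks (using the column-stacking convention for both $\tilde k$ and $\delta$) into $-\tilde k_{il} \le a_{il}/\hat c_{ii}$ for all $i,l$, i.e. $a_{il} + \hat c_{ii}\tilde k_{il} \ge 0$, which is precisely $\tilde a_{il} \ge 0$ for the off-diagonal entries; together with step one this establishes that $\tilde A^{(r)}$ is a non-negative (positive in the paper's sense up to the zero diagonal) matrix. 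Third, I would use the selector $\mathbf e_i = e_i \otimes \mathbf 1_n$ to note $\mathbf e_i^T \tilde k = \sum_{l=1}^n \tilde k_{il}$, so constraint (\ref{LP2}) reads $\sum_l \tilde k_{il} \le \gamma_i = \frac{1}{\hat c_{ii}}(1 - \sum_l a_{il}) - \epsilon$, i.e. $\sum_l (a_{il} + \hat c_{ii}\tilde k_{il}) \le 1 - \hat c_{ii}\epsilon < 1$, which is $\sum_l \tilde a_{il}^{(r)} < 1$. Fourth, constraint (\ref{LP3}) reads $-\sum_l \tilde k_{il} \le \mu_i = \frac{1}{\hat c_{ii}}\sum_l a_{il}$, i.e. $\sum_l(a_{il} + \hat c_{ii}\tilde k_{il}) \ge 0$, giving the lower bound $\sum_l \tilde a_{il}^{(r)} \ge 0$. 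Combining the third and fourth steps yields $0 \le \sum_{l=1}^n \tilde a_{il}^{(r)} < 1$ for every $i$, which is condition (\ref{eq:suffcondition}) applied to the closed-loop matrix. Finally, I would remark that the linear program is feasible (e.g. $\tilde k = 0$ is feasible whenever the open-loop data already satisfy the bounds, and more generally the constraint set is a nonempty polytope because the box-type constraints (\ref{LP1})--(\ref{LP3}) are mutually compatible given $0 \le \sum_l a_{il}$), so a minimizer exists and the conclusions hold for it.

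The main obstacle I anticipate is purely bookkeeping: getting the indexing right between the vectorized object $\tilde k = [\tilde K_{\bullet 1}^T \ldots \tilde K_{\bullet n}^T]^T$, the Kronecker-structured selector $\mathbf e_i = e_i \otimes \mathbf 1_n$, and the componentwise entries $\tilde a_{il}$ as they appear in (\ref{ta}). In particular one must check that $\mathbf e_i^T \tilde k$ really sums the $i$-th \emph{row} of $\tilde K$ rather than a column — this hinges on how the columns are stacked and should be stated explicitly. There is also a minor subtlety in that Theorem~\ref{trm:analysis_equilirbium} and condition (\ref{eq:suffcondition}) were phrased in terms of $\hat c_{ii} c_{il}/\hat c_{ll}$, i.e. the original open-loop entries $a_{il}^{(r)}$; here we need the identical inequality with $a_{il}^{(r)}$ replaced by the closed-loop $\tilde a_{il}^{(r)}$, so I would note that the stability argument of Theorem~\ref{trm:analysis_equilirbium} depends on the matrix only through its entries and therefore applies verbatim to $\tilde A^{(r)}$ once $0 \le \sum_l \tilde a_{il}^{(r)} < 1$ is known. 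No deep difficulty is expected beyond this careful matching of notation.
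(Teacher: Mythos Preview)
Your proposal is correct and follows essentially the same route as the paper's own proof: both arguments directly unpack each constraint of the linear program into a componentwise statement about $\tilde a_{il}^{(r)}$, using (\ref{LP1}) for entrywise non-negativity, (\ref{LP4}) for the zero diagonal, and (\ref{LP2})--(\ref{LP3}) for the two-sided row-sum bound. Your additional remarks on LP feasibility and on the indexing subtlety of $\mathbf e_i^T \tilde k$ versus row/column sums are not in the paper but are worthwhile caveats rather than a different method.
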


\begin{proof}
For the first part, we wish to show that matrix $\tilde{A}^{(r)}$ is a positive matrix, i.e. $\tilde{A}^{(r)}>0$. This is true if all entries  are non-negative, i.e. $\tilde{a}_{il}\geq0 \textcolor{white}{...} \forall i,l=1,...,n$. From (\ref{ta}) we know that the generic off-diagonal entry is $\tilde{a}_{il}=a_{il}+\hat{c}_{ii}\Tilde{k}_{il}$. Constraint  (\ref{LP1}) then implies 
\begin{equation}\nonumber
    \tilde{a}_{il}=a_{il}+\hat{c}_{ii}\Tilde{k}_{il}\geq 0.
\end{equation}
The above can be rewritten as 
$$\Tilde{k}_{il} \leq  \frac{a_{il}}{\hat{c}_{ii}}  \quad  \forall i,l=1,\ldots,n.$$ This is the component-wise version of (\ref{LP1}).

For the second part, we aim at showing that all eigenvalues of $\tilde{A}^{(r)}$ are within the unit disc. It is well-known that all eigenvalues of a matrix $\tilde{A}^{(r)}$ lay in the unit disc if all diagonal values are zero and the inequalities $0\leq \sum_{l=1}^{n} \tilde{a}_{il}<1 \textcolor{white}{...} \forall i=1,...,n$ hold. Constraint (\ref{LP4}) ensures that all diagonal entries are null, i.e. $\tilde{a}_{il}=0 \textcolor{white}{...} \forall i=l$. Furthermore, constraints (\ref{LP2}) and (\ref{LP3})  ensure that each row sum of matrix $\tilde{A}^{(r)}$ is smaller than one and not smaller than 0. These constraints are obtained by setting $\sum_{l=1}^{n}\tilde{a}_{il}<1$ and $\sum_{l=1}^n\tilde{a}_{il}\geq0$. From (\ref{ta}) the latter can be equivalently written as:
\begin{align}\label{constraints1}
    & \sum_{l=1}^{n} a_{il}+\sum_{l=1}^{n}\hat{c}_{ii}\Tilde{k}_{il}<1, \quad \sum_{l=1}^{n} a_{il}+\sum_{l=1}^{n}\hat{c}_{ii}\Tilde{k}_{il}\geq0.
\end{align}

Let us rewrite (\ref{constraints1}) by isolating  $\Tilde{k}_{il}$ to the left-hand-side. This yields the following two sets of inequalities:
\begin{align}\label{constraints2}
    & \sum_{l=1}^{n}\Tilde{k}_{il}< \frac{1 - \sum_{l=1}^{n} a_{il}}{\hat{c}_{ii}}, \quad \sum_{l=1}^{n}\Tilde{k}_{il}\geq \frac{- \sum_{l=1}^{n} a_{il}}{\hat{c}_{ii}},
\end{align}
which correspond to  (\ref{LP2}) and (\ref{LP3}). This concludes the proof. 
$\square$
\end{proof}

\begin{figure} [h]
\centering
\begin{tikzpicture}[scale=0.65]%[overlay]
\begin{scope}[shift={(0,0)},scale=1]
\draw[->, very thick](0,0)--(12,0);\node at (12,-0.6) {\small $k_{is}$};
\draw[->, very thick](0,0)--(0,6);\node at (-0.6,6) {\small $k_{ir}$};
\draw[fill=red](3,2.2)--(3,3.3)--(10,1)--(5.2,1)--(3,2.2);
\draw[dashed, very thick](3,-1)--(3,4.8);\node at (2.99,5.3) {\small $k_{ir}\geq- \frac{a_{ir}}{\hat c_{ii}}$};
\draw[dashed, very thick](-1,1)--(11.5,1);\node at (11.3,1.5) {\small $k_{il}\geq- \frac{a_{il}}{\hat c_{ii}}$};
\draw[dashed, very thick](-.5,4)--(8,-.5);\node at (7,3.3) {\small $\displaystyle\sum\limits_{l=1}^{n} k_{il}\leq \gamma_i$};
\draw[dashed, very thick](-.5,4.5)--(11.5,.5);\node at (4.7,.5) {\small $\displaystyle\sum\limits_{l=1}^{n} k_{il}\geq \mu_i$};
\end{scope}
\end{tikzpicture}
\vspace{5mm}
\caption{Graphical interpretation of the linear program (\ref{LP1})-(\ref{LP4}).} 
\label{fig:poly}
\end{figure}
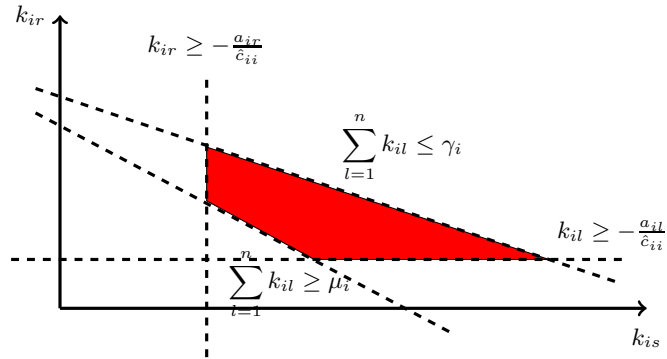

Figure \ref{fig:poly} provides a graphical interpretation of the linear constraints (\ref{LP1})-(\ref{LP4}) projected on a plane $k_{is}-k_{ir}$. The set of feasible solutions is in red. Constraints  (\ref{LP1}) are horizontal and vertical lines (dashed) which correspond to inequalities $k_{is}\geq- \frac{a_{is}}{\hat c_{ii}}$ and $k_{ir}\geq- \frac{a_{ir}}{\hat c_{ii}}$ respectively. The economic interpretation is as follows. The horizontal and vertical lines (dashed) identifies a minimum cash inflow from external investment to compensate the drop in market value $i$ due to companies $s$ and $r$ failing. Constraint (\ref{LP2}) is represented by a diagonal line which corresponds to hyperplane $\sum k_{il}= \gamma_i$ and identifies the feasible halfspace $\sum k_{il} \leq  \gamma_i$ (points below the diagonal line). Similarly, Constraint (\ref{LP3}) is represented by a diagonal line which corresponds to hyperplane $\sum k_{il}= \mu_i$ and identifies the feasible halfspace $\sum k_{il} \geq  \mu_i$ (points above the diagonal line). 

\subsection{Finding the investment choices $D$.}
\label{sec5.2.3}
%NETWORK FLOW PROBLEM

Once both the constant feedforward and dynamical feedback parts of the control are designed, we need to translate such control into investment choices through an opportune design of matrix $D$. This can be achieved by solving the following linear program, which has the structure of a typical network flow problem.

% %\vspace{2mm}
Let us define the new notation
\begin{equation}\nonumber
\begin{array}{lll}
d:=vec(D)=[D_{\bullet 1}^T \ldots D_{\bullet m}^T]^T,\\
\mathbf {\hat e_i} := e_i\otimes \mathbf 1_m=[\mathbf 0^T\ldots \mathbf 0^T \, \mathbf 1^T_m  \, \mathbf 0^T \ldots \mathbf 0^T]^T,\\
\mathbf {\tilde e_i} := \mathbf 1_m \otimes e_i =[e_i^T\ldots e_i^T]^T,\\
\mathbf {\theta_i} :=[p_1e_i^T\ldots p_m e_i^T]^T.
\end{array}
\end{equation}

The linear program is then given by
\begin{eqnarray}\label{LP:Dp}
%\begin{array}{ll@{}ll}
\mbox{(LP2)}~~~~\text{min}  & -\displaystyle \mathbf 1_{nm}^T d %\\\nonumber
\\\label{LP:Dp1}
\text{s.t.}  & \displaystyle\mathbf {\theta_i}^T d  = u^{(1)}_{i}+u^{(2)}_{i} & \forall i=1,\ldots,n %\\\nonumber
\\
 \label{LP:Dp2}& \displaystyle \mathbf {\tilde e_i}^T d \leq  1 & \forall i=1,\ldots,n
 %\\\nonumber
\\
 \label{LP:Dp3}& -d \leq 0 \\
 \label{LP:Dp4}& \displaystyle \mathbf {\hat e_l}^T d \leq  1 & \forall l=1,\ldots,m
 %\end{array}
\end{eqnarray}

The linear equality constraints  (\ref{LP:Dp1}) receive $u^{(1)}$ and $u^{(2)}$ as input and return the entries $d_{il}$, $i,l=1,\ldots, n$. Furthermore, (\ref{LP:Dp1}) works only if the entries of $u^{(1)}_{i}+u^{(2)}_{i}$ are non-negative. However, there exist possibilities that $u^{(1)}_{i}+u^{(2)}_{i}$ has negative entries. To tackle this problem, the different cases of negative entries are discussed below: 
\begin{itemize}
    \item [\textit{i)}] If a company $i$ has failed, its future investments in external assets do not matter anymore. Therefore, for a failed company $i$, entry $u^{(1)}_{i}+u^{(2)}_{i}$ should always be set to zero. 
    \item [\textit{ii)}] If a company $i$ has not failed but its entry $u^{(1)}_{i}+u^{(2)}_{i}$ is negative, this entry should be set to zero as well, because this company requires no further investments to survive.
\end{itemize}
 
Therefore, all negative entries $u^{(1)}_{i}+u^{(2)}_{i}<0$ should be set to zero prior to running the above linear program.

Consider the designed control $D p$, where $D$ denotes the investment choices. Then the linear program (\ref{LP:Dp})-(\ref{LP:Dp4}) finds the investment choices for all companies. To see this, note that constraint (\ref{LP:Dp1}) ensures that the choice of investments in external assets for company $i$ results in the desired market value to survive, as the required investments in external assets for the non-failing companies are defined by $u^{(1)}_{i}+u^{(2)}_{i}$. Constraint (\ref{LP:Dp2}) ensures that a company can invest for no more than 100\% to ensure that companies do not spend more than possible. Constraint (\ref{LP:Dp3}) ensures that only positive investments are made since it is not possible to negatively invest in external assets. Constraint (\ref{LP:Dp4}) ensures that companies can never invest in more than what is left of a certain external asset. Therefore, they have to strategically decide in which asset they want to invest.

\begin{figure} [h]
\centering
\begin{tikzpicture}[scale=0.65]%[overlay]
\begin{scope}[shift={(0,0)},scale=1]
\draw [very thick ](0,0) circle (.4cm);\node at (0,0) {$p_1$};
\node at (-2.5,0.5) {$\sum \limits_{i=1}^{n} p_1 d_{i1} \leq p_1$};
\draw [very thick ](0,-2) circle (.4cm);\node at (0,-2) {$p_2$};
\node at (-2.5,-1.5) {$\sum \limits_{i=1}^{n} p_2 d_{i2} \leq p_2$};
%\draw [very thick ](0,0) circle (.4cm);
\node at (0,-4) {\large $\vdots$};
\draw [very thick ](0,-6) circle (.4cm);\node at (0,-6) {$p_m$};
\node at (-2.5,-5.5) {$\sum \limits_{i=1}^{n} p_m d_{im} \leq p_m$};
\end{scope}
\begin{scope}[shift={(4,0)},scale=1]
\draw [very thick ](0,0) circle (.4cm);\node at (0,0) { $1$};\node at (2.5,0.5) {$w_1=u^{(1)}_1+u^{(2)}_1$};
\draw [very thick ](0,-2) circle (.4cm);\node at (0,-2) {$2$};\node at (2.5,-1.5) {$w_2=u^{(1)}_2+u^{(2)}_2$};
%\draw [very thick ](0,0) circle (.4cm);
\node at (0,-4) {\large $\vdots$};
\draw [very thick ](0,-6) circle (.4cm);\node at (0,-6) {$n$};\node at (2.5,-5.5) {$w_n=u^{(1)}_n+u^{(2)}_n$};

\end{scope}
\draw[->, very thick](0.4,0)--(3.6,0);
\draw[->, very thick](0.4,-2)--(3.6,-2);
\draw[->, very thick](0.4,-6)--(3.6,-6);
\draw[->, very thick](0.4,0)--(3.6,-1.8);
\draw[->, very thick](0.4,0)--(3.6,-5.8);
\draw[->, very thick](0.4,-2)--(3.6,-0.2);
\draw[->, very thick](0.4,-2)--(3.4,-5.8);
\draw[->, very thick](0.4,-6)--(3.6,-0.4);
\draw[->, very thick](0.4,-6)--(3.6,-2.2);
\end{tikzpicture}
\vspace{5mm}
\caption{Network flow problem representation of linear program  (\ref{LP:Dp})-(\ref{LP:Dp4}).} 
\label{fig:nfp}
\end{figure}
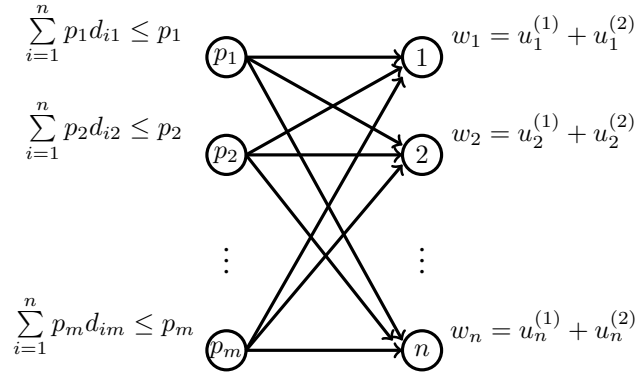

Figure \ref{fig:nfp} provides a network flow problem representation of linear program (\ref{LP:Dp})-(\ref{LP:Dp4}). The network topology is the one of a bipartite graph where the nodes on the left represents assets and the nodes on the right represent companies. We observe capacity constraints of type $\sum \limits_{i=1}^{n} p_l d_{il} \leq p_l$ for each asset $l=1,\ldots,m$. Such constraints are equivalent to (\ref{LP:Dp4}). Similarly we have demand constraints of type $\sum\limits_{l=1}^{m}p_{l}d_{il} =w_i= u^{(1)}_{i}+u^{(2)}_{i}$ for each company $i=1,\ldots,n$. Such constraints are (\ref{LP:Dp1}).

\section{Numerical analysis.}
\label{sec:num}

Let us simulate for a set of one-hundred companies.  All  parameters are listed in Table \ref{tbl:data1}. In particular, we set the initial states within a range between $1$ and $X(0)^+:=\max_{i=1,\ldots,n} X_i(0)=5000$. We set the threshold $\underline{v}=100$ namely $2\%$ of $X(0)^+$. We set the incurred failure cost $\beta=X(0)^+:=5000$. We set the asset values within the range from $1$ to about $0.1$ of $X(0)^+$. Each company invests on a different external asset (namely matrix $D=\mathbb I_{(100 \times 100)}$) and the values of the assets are increasing with step size equal to $6$. By doing this, we obtain a first set of (seventeen) companies  for which  condition~(\ref{constraintb>0_1bis}) is violated and a second set of (eighty-three) companies for which the same condition holds true.  All simulations are carried out with MATLAB on an Intel(R) Core(TM)2 Duo, CPU P8400 at 2.27 GHz and a 3GB of RAM.  The horizon window consists of $T=300$ iterations.

\begin{table}[h!]
\centering
 \begin{tabular}{|c | c|}
 \hline
 \textbf{Parameters} & \textbf{Value} \\ [0.5ex] 
 \hline\hline
 Number of companies & $n=100$\\[0.5ex] \hline
 Initial states & $X(0)=\begin{bmatrix} 1 & 5000  \end{bmatrix}^{50}$\\ [.1ex] \hline
 Threshold market value & $\underline{v}=100$\\[0.5ex] \hline
 Incurred failure costs & $\beta=5000$ \\[0.5ex] \hline
 Assets values& $p=\begin{bmatrix} 1 & 7 & 13 & \ldots &  595 \end{bmatrix}'$\\[.1ex] \hline
 Investments on assets& $D=\mathbb I_{(100 \times 100)}$\\[.1ex] \hline
 \hline
 \end{tabular}
 \caption{Parameters' values}
 \label{tbl:data1} 
\end{table}

We build a  cross-holding matrix as follows:
$C_{ij}=\frac{1}{10n}$ with probability 20\% and zero otherwise. In other words, each company has a share of 0.001\% in  two out of ten other companies. Note that this is the adjacency matrix of a random network with average degree equal to 20. In other words, all companies have approximately the same degree equal to 20. For each company, we simulate the sample error according to dynamics~(\ref{eq:error_model_33}). The pseudo-code of the simulation algorithm is in Table~\ref{fig:algorithm}.
 
%%%%%%%%%%%%%%%%%%%%%%%%%%%%%%%%%%%%%%%%%%%%%%%%%%%%%%%%%%%%%%%%%%%%%%%%%%%%%%%%%%%%%%%%%%%

\begin{table}[h!]\normalsize
\begin{center}
\begin{tabular}{p{9cm}}\\
%\begin{tabular}{p{8cm}}\\
\toprule
%\textbf{Algorithm} \\
%\midrule
\textbf{Input:} Set of parameters as in Table \ref{tbl:data1}  \\
\textbf{Output:} Time-evolution of error $X(t)$\\
$\quad 1: $ \textbf{Initialize.} Generate random matrix $C$, \\
$\quad \quad$ initial values $X(0)$, assets' values $p$\\
$\quad 2: $ \textbf{for} time $t=1,\ldots,T$ \textbf{do}\\
$\quad 3: \quad$ Calculate stabilizing control $u^{(1)}$ as in (\ref{eq:u1_pos})\\
$\quad 4: \quad$ Calculate  control $u^{(2)}$ solving LP (\ref{lp:LPk})-(\ref{LP4}) \\
$\quad 5: \quad$ Calculate  error according to dynamics (\ref{eq:error_model_33}); \\
$\quad 6: $ \textbf{end for} \\
$\quad 7:$ Display cluster of nodes violating~(\ref{constraintb>0_1bis}) to the left \\
$\quad \quad $ and all other nodes to the right \\
$\quad 8:$ Color red failed companies and blue all others\\
$\quad 9: $ \textbf{STOP}\\
\bottomrule
\end{tabular}
\end{center}\caption{Simulation algorithm} \label{fig:algorithm}
\end{table}

%%%%%%%%%%%%%%%%%%%%%%%%%%%%%%%%%%%%%%%%%%%%%%%%%%%%%%%%%%%%%%%%%%%%%%%%%%%%%%%%%%%%%%%%%%%

The outputs of the simulations are visualized in Figs.~\ref{fig:error}-\ref{fig:hatF20}.
In particular, Fig.~\ref{fig:error} displays a slowed down (step size $dt=0.1$) time plot of the evolution of the error $X(t)$ for $t=1,\ldots,T$. Seventeen companies fail  in the first time instances and eventually four more companies fail. So in total we observe twenty-one failed companies at the end of the horizon (stead-state). In Fig.~\ref{fig:propagation}, companies (nodes) for which condition~(\ref{constraintb>0_1bis}) is violated are clusterized to the left and all other nodes to the right. The size of the node increases with the investment on external assets (bigger nodes invest more on external assets).
\begin{figure}[h!]
    \centering
    \includegraphics[width=0.5\textwidth]{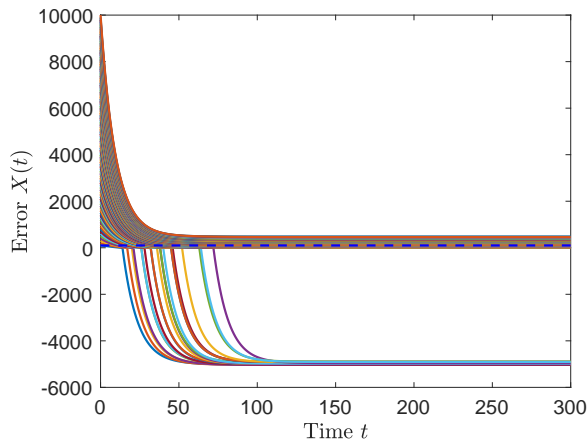} \caption{Time plot of the evolution of the error $X(t)$ obtained from (\ref{eq:error_model_33}). Threshold is indicated with a dashed line.}
    \label{fig:error}
\end{figure}
Simulations are initialized in the orthant where all companies are initially healthy (no failures), see plot at time $t=0$ (top-left). However, as condition ~(\ref{constraintb>0_1bis}) does not hold true for a subset of companies (vector $b^{(r)}(X)$ is not positive), then we have that (\ref{pos}) does not hold true as well, and the trajectory of the error $X(t)$ leaves the orthant. Some companies start failing and failures propagate throughout the network quickly, see plots at time $t=30$ (top-right) and $t=60$ (bottom-left). Failure involves all companies for which ~(\ref{constraintb>0_1bis}) does not hold, but eventually also companies for which~(\ref{constraintb>0_1bis}) holds true. This occurs as condition~(\ref{cilsub}) does not hold, namely we have healthy companies from the right cluster which are still connected with failed companies from the left cluster. In other words, it is not true that at time $t=60$, $c_{il}=0$, for all $i,l \in N$, $i\in \mathcal P^{(r)}$, $l \in \mathcal N^{(r)}$. Therefore also (\ref{pos_sub}) does not hold and three healthy companies eventually fail as well. You can think of $\mathcal P^{(r)}$ at time $t=60$ as the cluster to the right and the fact that three companies in this cluster eventually fail implies that (\ref{pos_sub}) does not hold. In other words,  the set $\mathcal S^{(r)}$ at time $t=60$ is not positively invariant. Note that eventually $20 \%$ of the companies fail, which is in agreement with the fact that each company has on average 20 neighbors and $\frac{1}{5}$ of such neighbors is on average failed, namely each company has on average 4 failed neighbor companies. If we look at the values of $\hat k$ we notice that the first 20 companies have $\hat k \leq 4$. In Fig. \ref{fig:hatF20} we plot function $\hat F_n(\tau)$ as in (\ref{hatF}) (thick line). Estimate (\ref{est}) is the intersection point with bisect (thin line) which amounts to a bit less the 20\% of the total.
\begin{figure}[h!]
    \centering
    \includegraphics[width=0.55\textwidth]{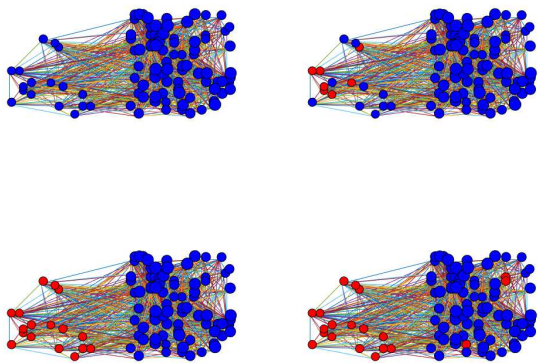}
            \caption{Cascading failure for a random network with average connectivity $\langle \kappa \rangle = 20$: failed companies in red at $t=0,30,60,T$.}
    \label{fig:propagation}
\end{figure}
 
\begin{figure}[h!]
    \centering
    \includegraphics[width=0.5\textwidth]{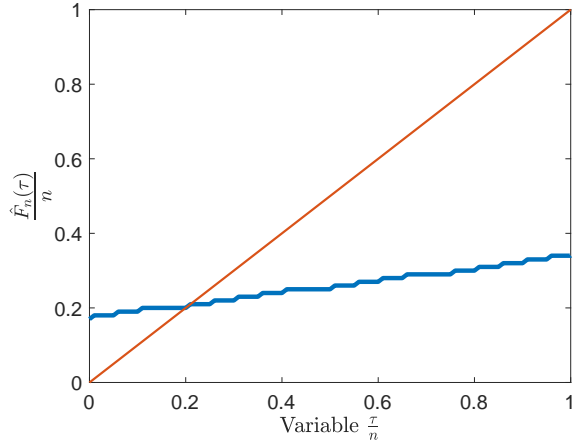}
            \caption{Function $\hat F_n(\tau)$ as in (\ref{hatF}) (thick line). Estimate (\ref{est}) is the intersection point with bisect (thin line). }
    \label{fig:hatF20}
\end{figure}

We repeat the experiment with a dense graph and the result is displayed in Figs.~\ref{fig:error80}-\ref{fig:hatF80}.  Here we increase the probability of cross-holding from 20\% to 80\%. This means that we have a random network with an average degree equal to 80. Each company has cross-shares to about 80\% of the other companies.  In particular, Fig. \ref{fig:error80} displays a slowed down (step size $dt=0.1$) time plot of the evolution of the error $X(t)$ for $t=1,\ldots,T$. Eventually forty-eight companies fail at the end of the horizon (stead-state). This amounts to about $50 \%$ of the companies, which is in agreement with the fact that each company has on average 80 neighbors and $\frac{1}{2}$ of such neighbors is on average failed, namely each company has on average 40 failed neighbor companies. If we look at the values of $\hat k$ we notice that the first 50 companies have $\hat k \leq 40$. In Fig. \ref{fig:hatF80} we plot function $\hat F_n(\tau)$ as in (\ref{hatF}) (thick line). Estimate (\ref{est}) is the intersection point with bisect (thin line) which amounts to about 50\% of the total.

\begin{figure}[h!]
    \centering
    \includegraphics[width=0.5\textwidth]{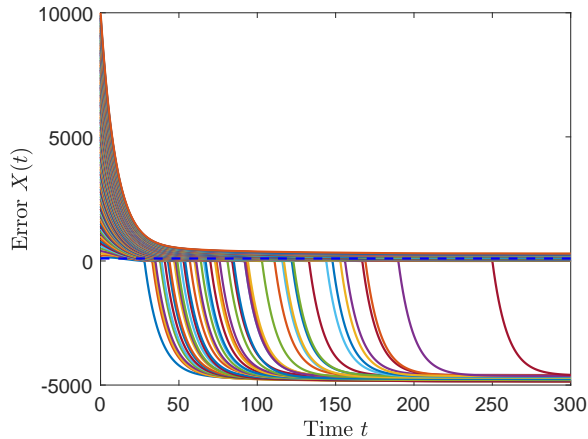} \caption{Time plot of the evolution of the error $X(t)$ obtained from~(\ref{eq:error_model_33}). 
            }
    \label{fig:error80}
\end{figure}

\begin{figure}[h!]
    \centering
    \includegraphics[width=0.55\textwidth]{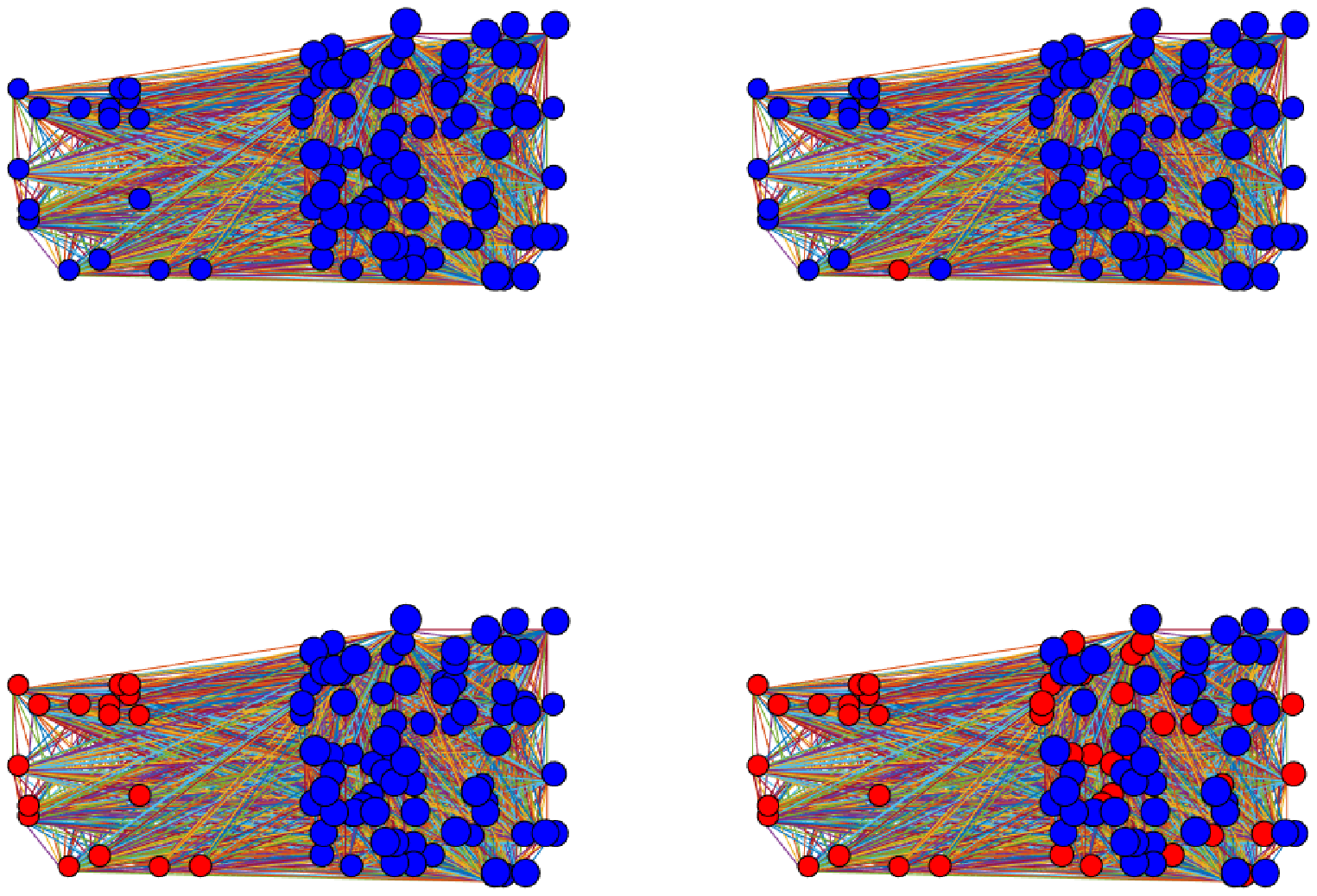}
            \caption{Cascading failure for a random network with average connectivity $\langle \kappa \rangle = 80$: failed companies in red at $t=0,30,60,T$.}
    \label{fig:propagation3}
\end{figure}

\begin{figure}[h!]
    \centering
    \includegraphics[width=0.5\textwidth]{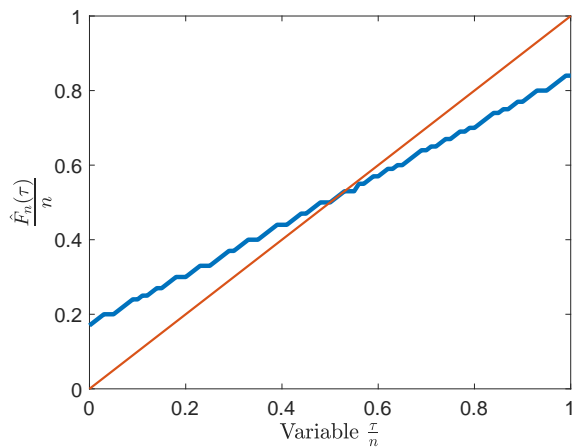}
            \caption{Function $\hat F_n(\tau)$ as in (\ref{hatF}) (thick line). Estimate (\ref{est}) is the intersection point with bisect (thin line). }
    \label{fig:hatF80}
\end{figure}

%%%%%%

We repeat the experiment a third time with a sparse graph and the result is displayed in Figs.~\ref{fig:Xpw}-\ref{fig:hatFpw}.  Here the probability of cross-holding follows a power law $P(\kappa) \sim \kappa^{-\rho}$, where $P(\kappa)$ is the fraction of nodes in the network having $\kappa$ out-degree connections to other nodes and $\rho=2.1$. This distribution generates a scale-free networks with very many nodes poorly connected and very few hubs. The average connectivity is $\langle \kappa \rangle=20$. This time the set $\mathcal S^{(r)}$ at time $t=60$ is positively invariant (no healthy companies from the right cluster fail). In particular, Fig. \ref{fig:Xpw} displays a slowed down (step size $dt=0.1$) time plot of the evolution of the error $X(t)$ for $t=1,\ldots,T$. Eventually seventeen companies fail  at the end of the horizon (stead-state). Note that eventually about $20 \%$ of the companies fail, which is in agreement with the fact that each company has on average 20 neighbors and $\frac{1}{5}$ of such neighbors is on average failed, namely each company has on average 4 failed neighbor companies. In Fig. \ref{fig:hatFpw} we plot function $\hat F_n(\tau)$ as in (\ref{hatF}) (thick line). Estimate (\ref{est}) is the intersection point with bisect (thin line) which amounts to a bit less the 20\% of the total.

\begin{figure}[h!]
    \centering
    \includegraphics[width=0.5\textwidth]{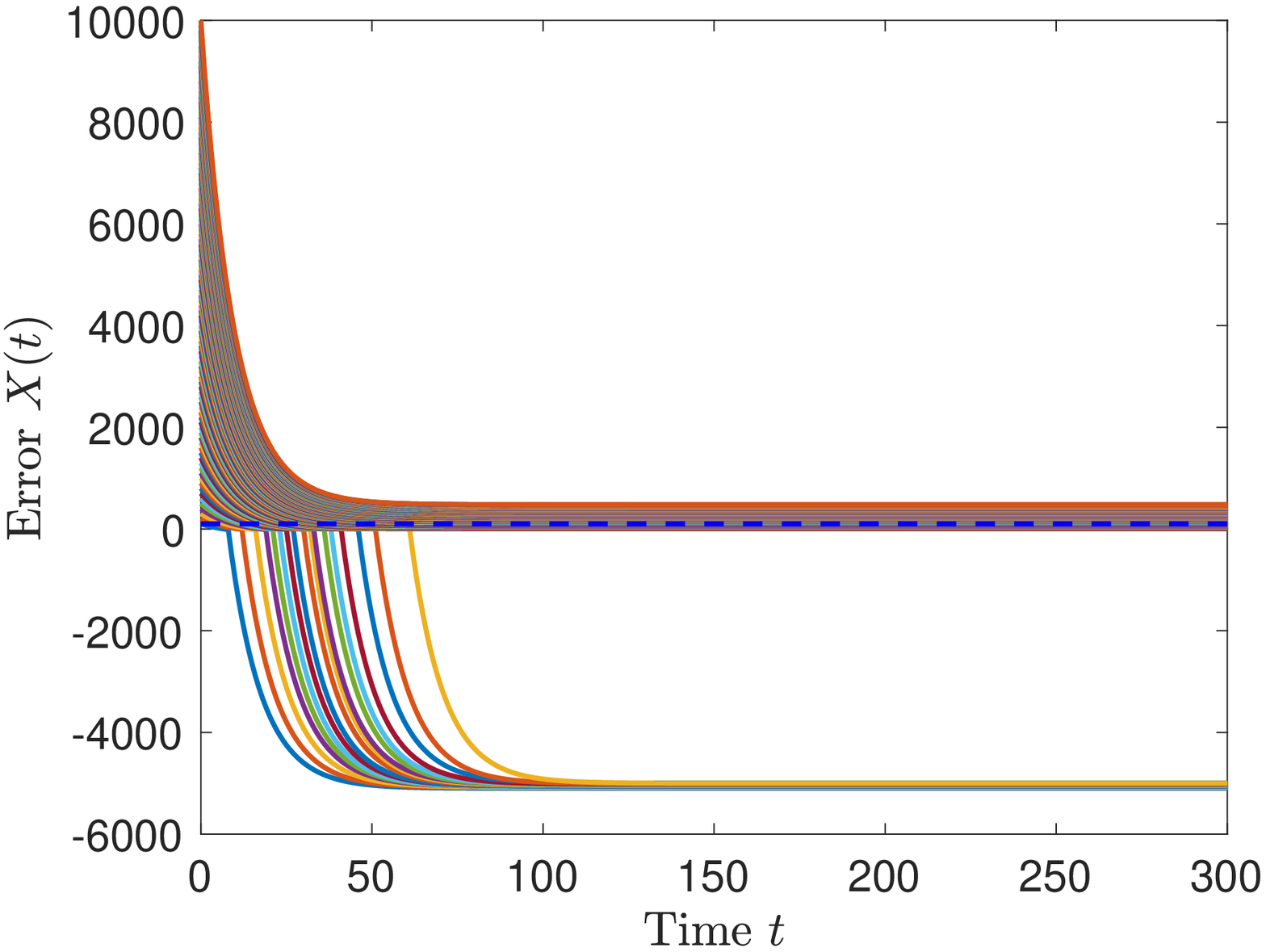} \caption{Time plot of the evolution of the error $X(t)$ obtained from~(\ref{eq:error_model_33}). 
            }
    \label{fig:Xpw}
\end{figure}

\begin{figure}[h!]
    \centering
    \includegraphics[width=0.55\textwidth]{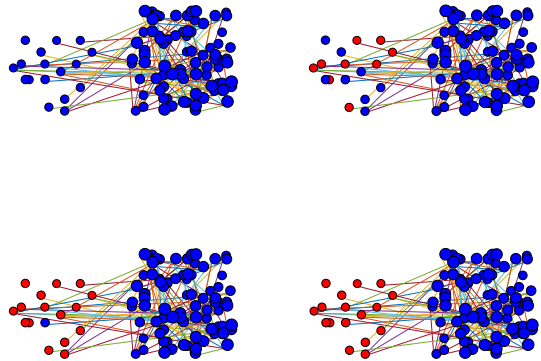}
            \caption{Cascading failure for a scale-free network with  average connectivity $\langle \kappa \rangle=2$: failed companies in red at $t=0,30,60,T$.}
    \label{fig:propagation4}
\end{figure}

\begin{figure}[h!]
    \centering
    \includegraphics[width=0.5\textwidth]{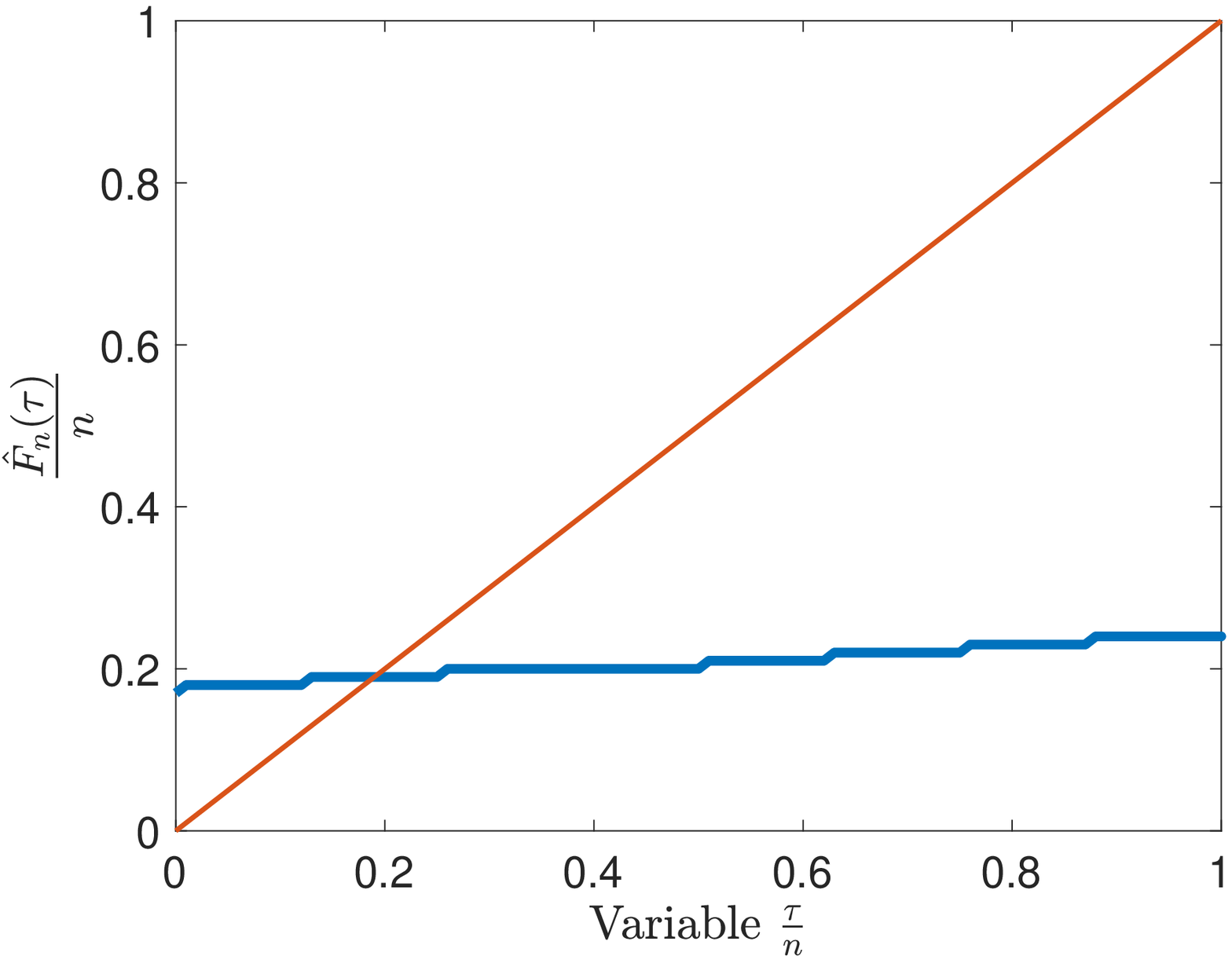}
            \caption{Function $\hat F_n(\tau)$ as in (\ref{hatF}) (thick line). Estimate (\ref{est}) is the intersection point with bisect (thin line). }
    \label{fig:hatFpw}
\end{figure}

In a fourth set of simulations we consider again a random network with an average degree $\langle \kappa \rangle=80$. Figure~\ref{fig:u1} displays the time plot of the error (left). Failures propagate through the whole network.  
Figure~\ref{fig:u1} (right) shows the effect of the stabilizing control $u^{(1)}$ as in (\ref{eq:u1_pos}). The control becomes active at time $t=60$ thus preventing further cascade of failures. The y-axis on the right shows the magnitude of $u^{(1)}$, which can be viewed as cash flow deriving from external assets. 

\begin{figure}[h!]
\begin{tabular}{ll}
\includegraphics[scale=0.4]{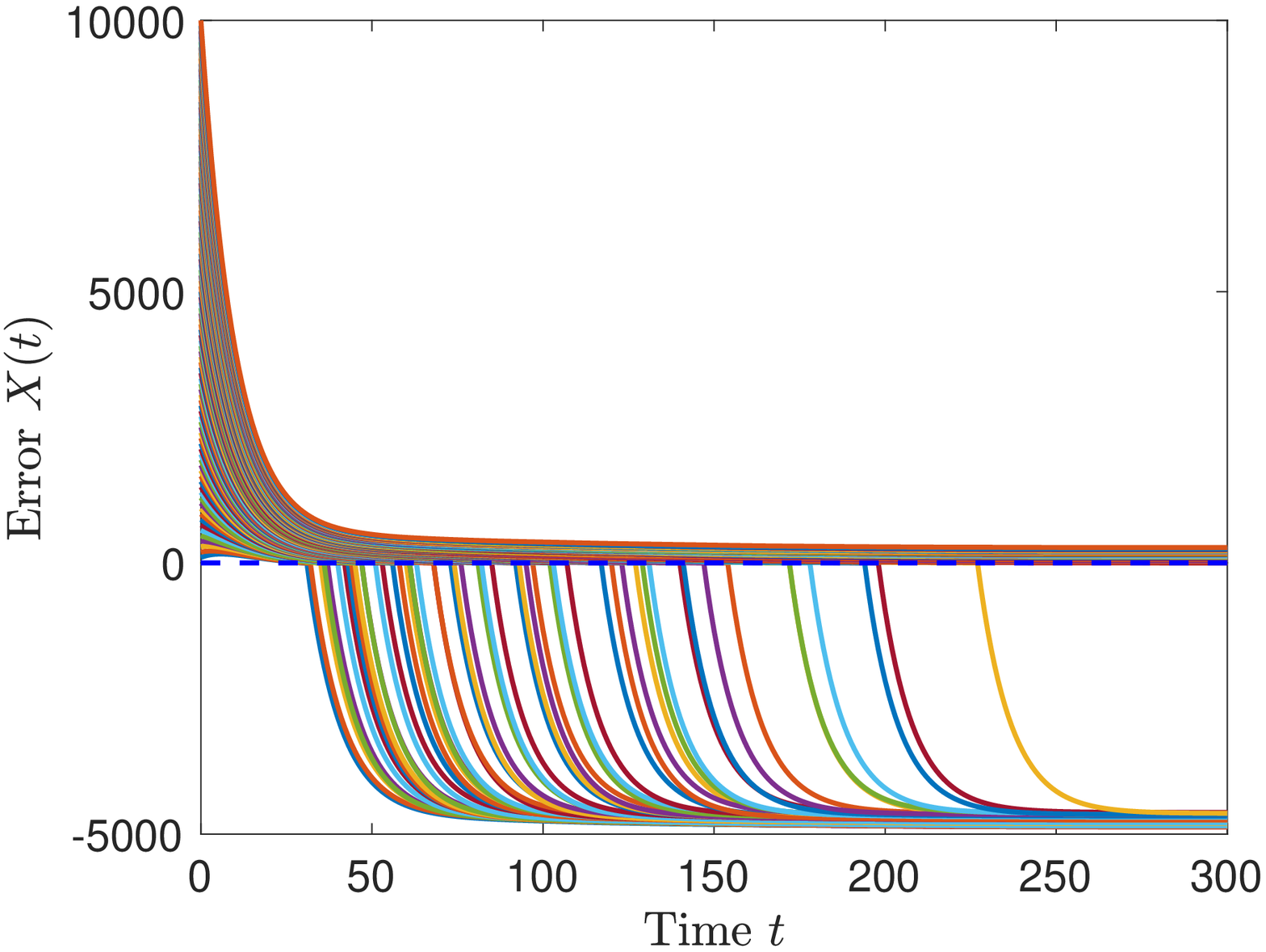}
\hspace{-.6cm}
\includegraphics[scale=0.4]{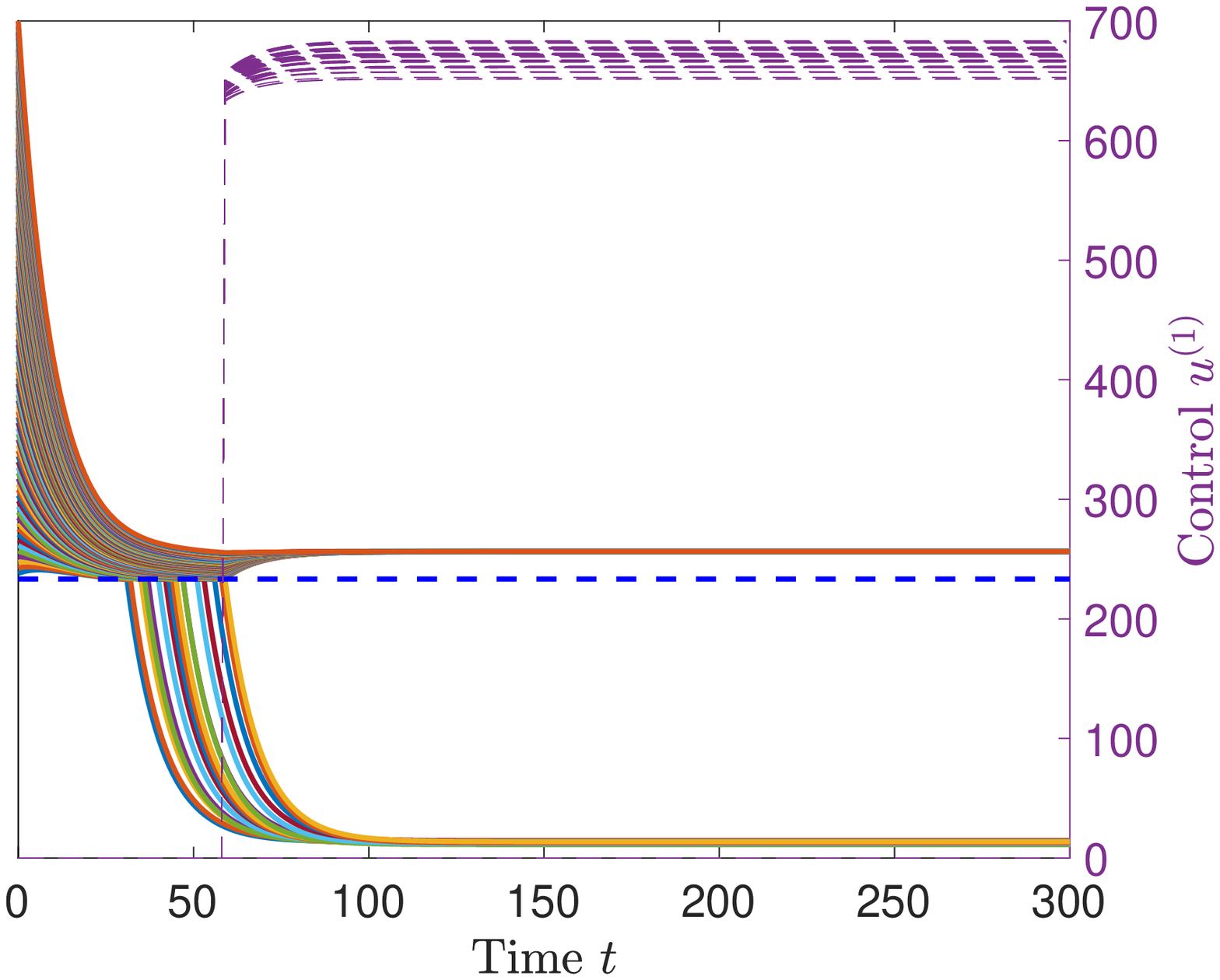}
\end{tabular}
\caption{Left: Failures propagate through the whole network. 
Right: Stabilizing control $u^{(1)}$ (\ref{eq:u1_pos}) activates at time $t=60$.}
\label{fig:u1}
\end{figure}

In a fifth set of simulations we consider again a random network with an average degree $\langle \kappa \rangle\approx 20$. Figure~\ref{fig:u2} displays the time plot of the error (left). Dynamics is very slow and close to instability and indeed we obtain eigenvalues very close to 1 in absolute value. Figure~\ref{fig:u2} (right) shows the effect of the stabilizing control $u^{(2)}$ where $\hat{K}^{(r)}$ is designed according to the linear program (\ref{lp:LPk})-(\ref{LP4}).

\begin{figure}[h!]
\begin{tabular}{ll}
\includegraphics[scale=0.4]{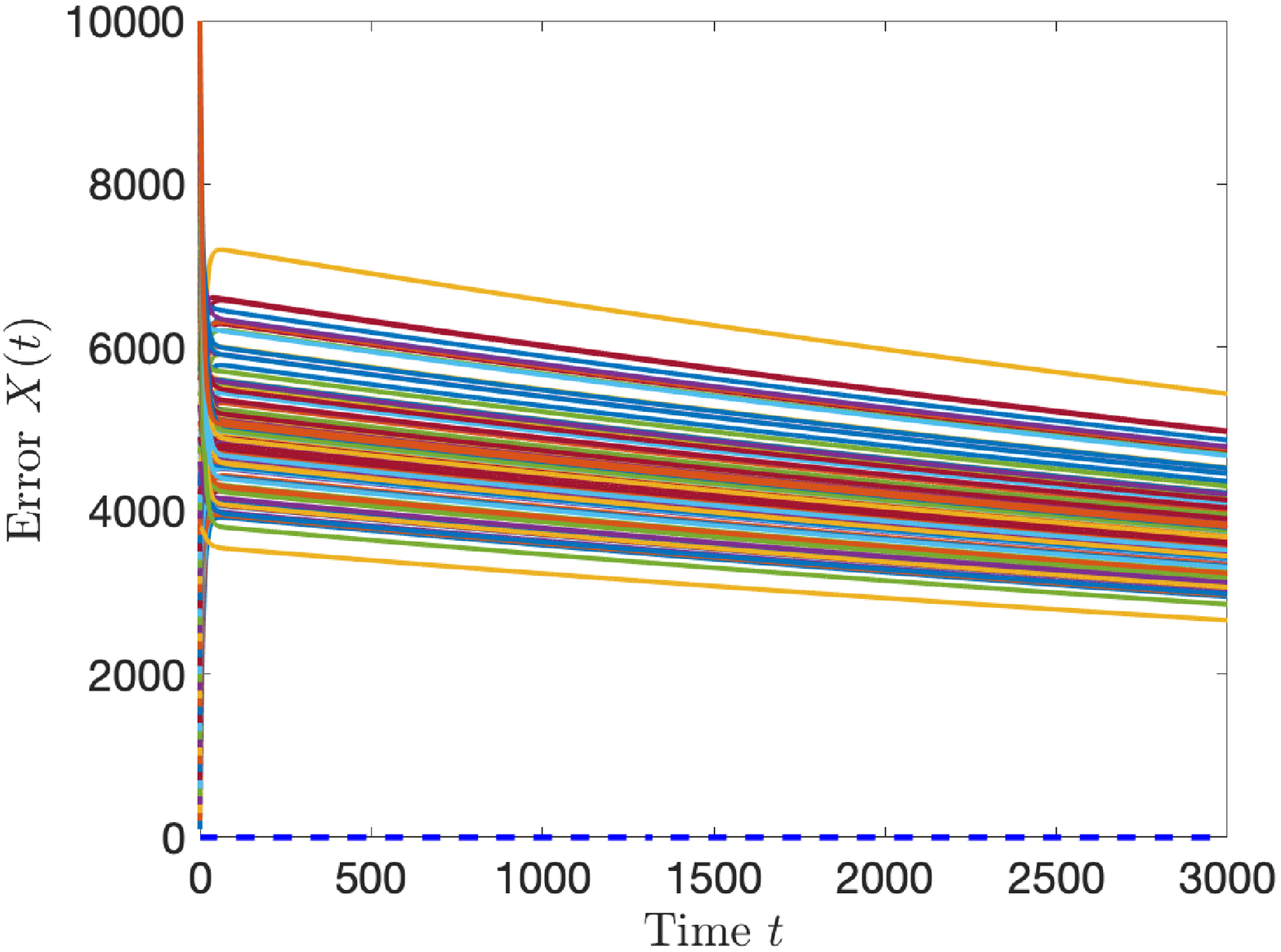}
\hspace{-.6cm}
\includegraphics[scale=0.4]{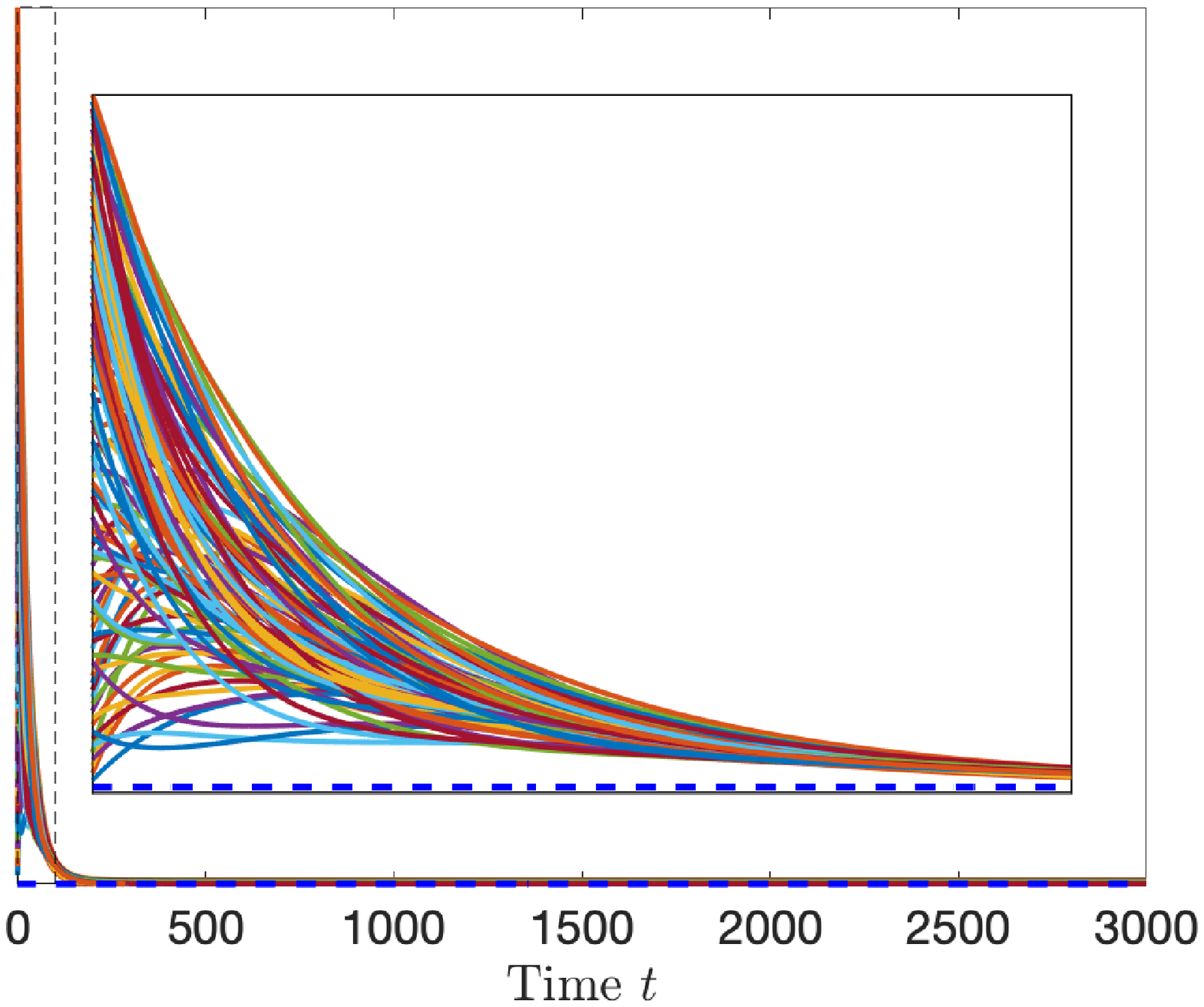}
\end{tabular}
\caption{Left: Dynamics is very slow and close to instability. 
Right: effect of the stabilizing control $u^{(2)}$ where $\hat{K}^{(r)}$ is from (\ref{LP1})-(\ref{LP4}).}
\label{fig:u2}
\end{figure}

\section{Conclusions.}
\label{sec:conc}

For a financial network we have developed a dynamic model of cascading failures. Theoretical analysis has provided a deeper understanding of the conditions that prevent cascades to occur and lead to  stability of the market values around certain equilibrium points. Such conditions involve the structure of the cross-holding matrix as well as the magnitude of the external assets. Results are illustrated on a large network with different connectivity. 
Finally, we have also provided a control design method to optimize the market investments which are now viewed as feedback-feedforward control inputs.

% Appendix here
% Options are (1) APPENDIX (with or without general title) or 
%             (2) APPENDICES (if it has more than one unrelated sections)
% Outcomment the appropriate case if necessary
%
% \begin{APPENDIX}{<Title of the Appendix>}
% \end{APPENDIX}
%
%   or 
%
% \begin{APPENDICES}
% \section{<Title of Section A>}
% \section{<Title of Section B>}
% etc
% \end{APPENDICES}

% Acknowledgments here
\section*{Acknowledgments.}
We thank Prof. Dr. Rakesh Vohra (University of Pennsylvania), for the useful discussions. In particular we thank him for his insights to improve the explanation of the motivation and novelty of our work, for the inputs to clarify the assumptions and conditions of our model, and for his comments about the control design.

% References here (outcomment the appropriate case) 

% CASE 1: BiBTeX used to constantly update the references 
%   (while the paper is being written).

\bibliographystyle{informs2014}

\end{document}